\documentclass[10pt,a4paper]{article}
\usepackage[numbers,sort&compress]{natbib}
\usepackage[T1]{fontenc}
\usepackage[utf8]{inputenc}
\usepackage{authblk}
\usepackage{amsmath,amssymb,amsthm,esint,bm}
\usepackage{mathrsfs}
\usepackage{bookmark}
\usepackage{amsmath}
\usepackage{enumitem}
\allowdisplaybreaks[3]

\newtheorem{definition}{Definition}[section]
\newtheorem{theorem}[definition]{Theorem}
\newtheorem{lemma}[definition]{Lemma}
\newtheorem{proposition}[definition]{Proposition}
\newtheorem{corollary}[definition]{Corollary}
\theoremstyle{remark}
\newtheorem{remark}[definition]{Remark}
\numberwithin{equation}{section}

\newcommand{\abs}[1]{\lvert#1\rvert}
\newcommand{\Abs}[1]{\left\lvert#1\right\rvert}
\newcommand{\norm}[1]{\lVert#1\rVert}

\newcommand{\rn}{{\mathbb{R}^d}}

\allowdisplaybreaks[4]

\title{Sharp gradient estimates for a class of singular/degenerate fully nonlinear elliptic equations with oblique boundary conditions and Hamiltonian terms}

\author[a]{Wentao Huo}
\affil[a]{School of Mathematical Sciences, Nankai University, Tianjin 300071, P.R. China}
\date{\today}

\usepackage{hyperref}
\begin{document}
	\maketitle
	\footnotetext[1]{E-mail: huowentaoouc@163.com (W. Huo)} 

\begin{abstract}
	This paper focus on a class of singular or degenerate fully nonlinear elliptic equations with Hamiltonian terms under oblique boundary conditions on $C^{1}$ domains. Under quite general conditions on the singularity/degeneracy of the model, we establish the sharp $C^{1,\alpha}$ regularity up to the boundary within a unified framework. 
	
	Mathematics Subject classification (2020): 35B65; 35J60; 35J70; 35J75; 35D40.
	
	Keywords: Global regularity; fully nonlinear degenerate/singular equations; oblique boundary conditions; Hamiltonian terms; viscosity solution. 
\end{abstract}


\section{Introduction}\label{section1}
Since the 1980s, the notion of the viscosity solution has been applied widely in the study of non-divergent equations especially of fully nonlinear elliptic equations, and a series of important interior regularity results and global results with Dirichlet/Neumann/Oblique boundary conditions have been obtained (see \cite{Caff1,Caffarelli1989,LiZhang,Silvestre2006CPDE,Silvestre2014CPDE,Winter} and references therein) . 

Despite of the profound importance of the aforementioned works, a large number of mathematical models involve operators whose diffusion properties (ellipticity) may degenerate or blow-up along an a priori unknown region, such as $S(u):=\left\{x:Du(x)=0\right\}$, which depends on the solution itself. From the mathematical view point, this fact impels less efficient diffusion features for the model near such a region and therefore the regularity theory for solutions to such equations become more sophisticated. 

In this paper, we examine the gradient regularity of viscosity solutions to the following degenerate/singular fully nonlinear elliptic equations with Hamiltonian terms and oblique boundary condition
\begin{equation}\label{zhumodel}
	\left\{
	\begin{array}{rcrcl}
		\Phi(\abs{Du}, x)F(D^2 u, x)+\mathcal{H}(Du,x)
		 &=&f(x) & \text{in} & \Omega, \\
		\mathcal{B}(Du,u,x) & = & g(x) & \text{on} & \partial \Omega,
	\end{array}
	\right.
\end{equation}
where 
\begin{equation}\label{1100}
	\mathcal{B}(Du,u,x):= \beta(x)\cdot Du+\gamma(x)u
\end{equation}
is the oblique boundary condition, $\Omega \subset \mathbb{R}^d$ $(d\geq 2)$ is a bounded $C^{1}$ domain, $F$ is a uniformly elliptic operator, $\Phi:[0,\infty)\times\Omega\rightarrow [0,\infty)$ is a continuous map possessing degeneracy/singularity as the gradient vanishes, the Hamiltonian term $\mathcal{H}(Du,x)$ and data $f,\beta,\gamma,g$ fulfill appropriate conditions (to be clarified soon). We remark that the word oblique means "having a sloping direction". In \eqref{1100}, $\beta$ plays a role of slope. When $\beta$ makes an angle more than some fixed level on $\partial\Omega$, the condition \eqref{1100} is said to be regular and otherwise it is said to be nonregular. By way of illustration, the simplest example of the regular oblique boundary condition is the Neumann boundary condition, that is, the case of $\beta={\mathbf{n}}$ and $\gamma=0$ where ${\mathbf{n}}$ is the outward normal vector of $\partial\Omega$. Therefore, we can view the condition \eqref{1100} as a sort of generalization of the Neumann boundary condition.

The study of equations like \eqref{zhumodel} arises naturally in the optimal stochastic control problems \cite{Fleming,Leoni,Birindelli2019,Lions1989}, in image enhancements \cite{Levine2006SIAM,Bronzi2020}, and as a generalization of Hamilton-Jacobi equations \cite{Armstrong2015,Pimentel2023}. On the other hand, problems involving oblique tangential derivative conditions are naturally motivated by the modeling of several phenomena, such as reflected shocks in transonic flow \cite{CKL00}, the theory of capillary problems \cite{ESY96,FinnLuli}, Brownian motion \cite{EFH,RW}. Over the last two decades, the theory of equations like \eqref{zhumodel} has experienced remarkable progress. 
In the pioneering works \cite{Birindelli2004,Birindelli2006,Birindelli2007CPAA,Birindelli2010JDE}, existence and uniqueness, maximum principles, Harnack inequality and H\"{o}lder regularity were obtained. The graident H\"{o}lder regularity of solutions is the subject of groundbreaking work \cite{Imbert1}, where they established interior $C^{1,\alpha}$ regularity of viscosity solutions to $|Du|^{p}F(D^{2}u)=f$ with $p\geq 0$; the optimality of its H\"{o}lder exponent for the same problem is shown in \cite{Ricarte}. Subsequently, this type of local regularity result has been extended to various kinds of degenerate/singular fully nonlinear equations, please refer to \cite{Bronzi2020,Fili, Silva2020,Fang, Silva2023,Baasandorj,Andrade} and the references therein. It is noteworthy to mention that the recent paper \cite{Huo2026} derived local optimal $C^{1,\alpha}$ regularity of solutions to \eqref{zhumodel}.

As for the Dirichlet problem in the purely degenerate/singular setting, i.e., $\mathcal{H}\equiv0$, there has been much progress in the regularity of solutions. For instance, if $\Phi(t,x)=t^{p}$ with $p\geq 0$, Araújo and Sirakov \cite{Arau} obtained sharp boundary and global $C^{1,\alpha}$ regularity in the presence of sufficiently regular domain and boundary datum. More or less simultaneously, Bezerra J$\acute{\rm u}$nior et al. \cite{Silva2023} considered the variable-exponent nonhomogeneous scenario, i.e., $\Phi(t,x)=t^{p(x)}+a(x)t^{q(x)}$ with $0\leq p(\cdot)\leq q(\cdot)$, and proved that solutions are $C^{1,\alpha}$ regularity up to the boundary for some $\alpha>0$ depending additionally on the regularity of the boundary $\partial\Omega$, as well as on the boundary data. Afterwards, these results were extended to the Dirichlet problem exhibiting more general degeneracy/singularity in \cite{Byun1}. On the other hand, with regard to the Dirichlet problem for the singular or degenerate fully nonlinear equations involving  Hamiltonian terms, Birindelli-Demengel's seminal works \cite{Birindelli2014ESAIM, Birindelli2015, B-Demengel2016, B-Demengel2019} investigated the Dirichlet problem \eqref{zhumodel} with $\Phi(t,x)=t^{p}$, $\mathcal{H}(t,x)=h(x)|t|^{m}$ and $\mathcal{B}=u$ on a $C^{2}$ domain. Based on an improvement-of-flatness approach, they obtained global $C^{1,\alpha}$ regularity for some unspecified exponent $\alpha\in(0,1)$ in \cite{Birindelli2014ESAIM, Birindelli2015, B-Demengel2016} for the sublinear/linear cases $0<m\leq p+1$, and in \cite{B-Demengel2019} for the superlinear/subquadratic cases $p+1<m\leq p+2$.

Within the framework of Neumann boundary conditions, Patrizi \cite{Patrizi2008JMPA} proved a global $C^{1,\alpha}$ regularity for the singular case under the homogeneous Neumann condition and on a $C^{2}$ domain. Subsequently, Banerjee and Verma \cite{BanerjeeVerma} considered the Neumann problem \eqref{zhumodel} with $\Phi(t,x)=t^{p}$ ($p\geq 0$), $\mathcal{H}=0$, $\beta={\mathbf{n}}$ and $\gamma=0$, and established $C^{1,\alpha}$ regularity up to the boundary on a $C^{2}$ domain. In this setting, the optimal gradient H\"{o}lder regularity was derived in \cite{Ricarte2020Neumann}. 

In the context of oblique boundary conditions, Byun et al. \cite{Byun2025CVPDE} investigated \eqref{zhumodel} with $\Phi(t,x)=t^{p}$ ($p\geq 0$) and $\mathcal{H}=0$. Based on perturbation and compactness arguments, they proved sharp $C^{1,\alpha}$ estimate up to the boundary. Concerning this problem, Bessa et al. \cite{BesRicSil26} extended this result for the optimal gradient regularity theory to the degenerate oblique setting under relaxed convexity assumptions. Afterwards, Byun et al. \cite{BKK26arxiv} studied the problem \eqref{zhumodel} with $\mathcal{H}=0$ and $\Phi$ exhibiting general degeneracy/singularity (see condition \eqref{A2} described 
below), and established global $C^{1,\alpha}$ regularity. However, with regard to this type of regularity result for singular or degenerate fully nonlinear equations with Hamiltonian terms under oblique boundary conditions, there are very few results. To the best of our knowledge, the only study in this direction was was carried out by \cite{Ricarte2026arxiv}, which extended the estimates to the following degenerate fully nonlinear elliptic equation with Hamiltonian terms
\begin{equation*}
	\left\{
	\begin{array}{rcrcl}
		|Du|^{p}F(D^2 u)+h(x)|Du|^{m}
		&=&f(x) & \text{in} & \Omega, \\
		\beta(x)\cdot Du+\gamma(x)u & = & g(x) & \text{on} & \partial \Omega,
	\end{array}
	\right.
\end{equation*}
with $p\geq 0$ and $0<m\leq 1+p$.

Inspired by the work mentioned above, we naturally consider degenerate/singular fully nonlinear elliptic equation with variable coefficient and Hamiltonian terms under oblique boundary condition. Our primary focus is on establishing sharp global $C^{1,\alpha}$ regularity estimate for viscosity solutions to \eqref{zhumodel} in a unified way, provided that $\Phi(|Du|,\cdot)$ satisfies the quite general degeneracy/singularity \eqref{A2} below.
\subsection{Assumptions and main results}
We begin by introducing some notations here.
\begin{itemize}
	\item  $S^{d}$ denotes the set of all real $d\times d$ symmetric matrices.
	\item  For $r>0$, $B_{r}(x_{0})$ denotes the open ball with radius $r$ and centred at $x_{0}\in\rn$. In particular, we shall simply denote $B_{r}:=B_{r}(0)$. 
	\item [$\bullet$] We write ${B}^+_r := {B}_r \cap \{x_d>0\}$ and ${T}_r := {B}_r \cap \{x_d=0\}$.
	\item  For a domain $\Omega \subset \mathbb{R}^d$ and $x_0 \in \mathbb{R}^d$ we write $\Omega_r := \Omega \cap {B}_r$, $\partial \Omega_r := \partial \Omega \cap {B}_r$ and $\Omega_r(x_0):=\Omega \cap {B}_r(x_0)$, $\partial \Omega_r(x_0):=\partial \Omega \cap {B}_r(x_0)$.
	 \item For any measurable set $A$ with $|A| =0$ and measurable function $\phi$, we write
	 $$\fint_{A}\phi{\rm d}x:=\frac{1}{|A|}\int_{A}\phi{\rm d}x.$$	
\end{itemize}

We proceed by presenting the assumptions to be employed throughout this article.
\begin{enumerate}[label=(\text{\bf A}\arabic{enumi}),ref=\textbf{A}\arabic{enumi}]
	\item \label{A1} \textbf{({Uniformly ellipticity})}
	The fully nonlinear operator $F:S^{d}\times \overline{\Omega}\rightarrow \mathbb{R}$ is uniformly $(\lambda,\Lambda)$-elliptic, that is, there exist constants $0<\lambda\leq \Lambda$ such that, for any $M,N\in S^{d}$ and $x\in \overline{\Omega}$, 
	\begin{equation*}
		\mathcal{P}_{\lambda,\Lambda}^{-}(N)\leq F(M+N,x)-F(M,x)\leq \mathcal{P}_{\lambda,\Lambda}^{+}(N).
	\end{equation*}
	Here, $P_{\lambda,\Lambda}^{\pm}$ are the usual Pucci extremal operators, defined for each $M\in S^{d}$ by
	$$\mathcal{P}_{\lambda,\Lambda}^{+}(M):=\Lambda\sum\limits_{e_{i}>0}e_{i}+\lambda\sum\limits_{e_{i}<0}e_{i},\quad \mathcal{P}_{\lambda,\Lambda}^{-}(M):=\lambda\sum\limits_{e_{i}>0}e_{i}+\Lambda\sum\limits_{e_{i}<0}e_{i},$$
	where $\{e_{i}\}_{i=1}^{d}$ are the eigenvalues of the matrix $M$. For normalization purposes, we assume
	that $F(0,\cdot) = 0$.
	\item \label{A2} \textbf{(Singularity/Degeneracy)} The function $\Phi:[0,\infty)\times\Omega\rightarrow [0,\infty)$ is a continuous map satisfying the following properties:\\
	(i) there exist constants $s(\Phi)\geq i(\Phi)>-1$ such that for every $x\in \Omega$, the map $t\mapsto \frac{\Phi(t,x)}{t^{i(\Phi)}}$ is almost non-decreasing and the map $t\mapsto \frac{\Phi(t,x)}{t^{i(\Phi)}}$ is almost non-increasing  with constant $L\geq 1$ in $(0,\infty)$;\\
	(ii) there exist constants $0<\nu_{0}\leq \nu_{1}$ such that $\nu_{0}\leq \Phi(1,x)\leq \nu_{1}$ for all $x\in \Omega$.
	\item \label{A3} \textbf{(Assumption on Hamiltonian term)} The function $\mathcal{H}: \rn \times \Omega\rightarrow \mathbb{R}$ is continuous and there exist constants $\mathcal{K},\mathcal{M}>0$ and $0<m\leq 1+i(\Phi)$ such
	that
	\begin{equation*}
		|\mathcal{H}(t,x)|\leq \mathcal{K}+\mathcal{M}|t|^{m}
	\end{equation*}
	for every $t\in\rn$ and $x\in\Omega$.
	\item \label{A4} \textbf{(Assumptions on data)} The source term $f\in C({\Omega}) \cap L^{\infty}(\Omega)$, $\beta\in C^{\alpha^{\prime}}(\partial\Omega;\rn)$
	and $\gamma,g\in C^{\alpha^{\prime}}(\partial\Omega)$ for some $\alpha^{\prime}\in(0,1)$. In addition, there exists a constant $\mu_{0}>0$ such that
	\begin{equation}\label{1112}
		\beta\cdot {\mathbf{n}}\geq \mu_{0}\quad\text{and}\quad \|\beta\|_{L^{\infty}(\partial\Omega)}\leq 1,
	\end{equation}
	where ${\mathbf{n}}$ denotes the inner unit normal vector to $\partial \Omega$.
	\item \label{A5} \textbf{(Assumptions on domain)} $\Omega\subset \rn$ is a bounded $C^{1}$ domain.
\end{enumerate}
	
	Here a function $\varrho:(0,\infty)\rightarrow \mathbb{R}$ is almost non-decreasing or almost non-increasing with constant $L\geq 1$ if
	$\varrho(s)\leq L\varrho(t)$ or $\varrho(t)\leq L\varrho(s)$ respectively for $0 < s \leq  t$.

Since the model in \eqref{zhumodel} depends on $x$, it is necessary to introduce the function ${\rm osc}_{F}$ in order to 
measure the oscillation of $F$ in $x_{0}$. To this end, we define
\begin{equation*}
	\overline{\Omega}\ni 
	x,x_{0}\mapsto{\rm osc}_{F}(x,x_{0}):=\sup\limits_{M\in S^{d}\setminus \{0\}}\frac{\Abs{F(M,x)-F(M,x_{0})}}{\|M\|}.
\end{equation*}
For simplicity purposes, denote ${\rm osc}_{F}(x):={\rm osc}_{F}(x,0)$.

We emphasize that the singularity/degeneracy function $\Phi(\cdot,\cdot)$ considered in \eqref{A2} is quite general, which encompasses a broad class of growth conditions. Some important examples are given by
\begin{itemize}	
	\item  [{\rm (1)}] {\bf $p$-growth:} $\Phi(t,x)=t^{p}$ with $p>-1$;
	\item  [{\rm (2)}] {\bf Variable $p(x)$-growth:} $\Phi(t,x)=t^{p(x)}$ with $p(x)>-1$;
	\item  [{\rm (3)}] {\bf Double phase growth:} $\Phi(t,x)=t^{p}+a(x)t^{q}$ with $-1<p\leq q$ and $0\leq a(\cdot)\in C(\Omega)$;
	\item  [{\rm (4)}] {\bf Variable exponent double phase:} $\Phi(t,x)=t^{p(x)}+a(x)t^{q(x)}$ with $-1<p(x)\leq q(x)$ and $0\leq a(\cdot)\in C(\Omega)$;
	\item  [{\rm (5)}] {\bf Borderline double phase case:} $\Phi(t,x)=t^{p}+t^{q}\log(t+1)$ with $-1<p\leq q$ and $0\leq a(\cdot)\in C(\Omega)$;
	\item  [{\rm (6)}] {\bf Multi-phase growth:} $\Phi(t,x)=t^{p}+a(x)t^{q}+b(x)t^{\gamma}$ with $-1<p\leq q,\gamma$ and $0\leq a(\cdot),b(\cdot)\in C(\Omega)$.
\end{itemize}
Furthermore, condition \eqref{A3} on Hamiltonian term includes some typical examples, such as
\begin{itemize}	
	\item  [{\rm$\bullet$}] $\mathcal{H}(t,x)=h(x)|t|^{m}$ for $m>0$ and function $h\in C(\Omega)\cap L^{\infty}(\Omega)$;
	\item  [{\rm$\bullet$}] $\mathcal{H}(t,x)=\left\langle h(x),t \right \rangle |t|^{m-1}$ for $m>0$ and the vector field $h\in C(\Omega,\rn)\cap L^{\infty}(\Omega,\rn)$;
	\item  [{\rm$\bullet$}] $\mathcal{H}(t,x)=\sum\limits_{i=1}^{N}h_{i}(x)|t|^{m_{i}}$ for $m_{i}>0$, function $h_{i}\in C(\Omega)\cap L^{\infty}(\Omega)$, $i=1,2,\cdots,N$, and $m=\max\left\{m_{1},m_{2},\cdots,m_{N}\right\}.$
\end{itemize}

Finally, the assumption \eqref{A5} was motivated by the approach developed in \cite{Pimentel2023}. More precisely, we assume that $0\in \partial\Omega_{1}$ and that the boundary $\partial\Omega_{1}$ can be locally described as the graph of a $C^{1}$ function $\psi=\psi_{\Omega}:{T}_{1}\to\mathbb{R}$. Namely,
\begin{equation*}
	\partial\Omega_{1}=\{(x',x_{d})\in {B}_{1}\,:\,x_{d}=\psi(x')\}
	\qquad \text{and}\qquad 
	\{(x',x_{d})\in {B}_{1}\,:\,x_{d}>\psi(x')\}\subset\Omega_{1}.
\end{equation*}
We also write  $\beta\in C^{\alpha^{\prime}}({T}_{1})$ and identify $\beta(x)$ with $\beta(x',\psi(x^{\prime}))$.

We are now in a position to state the global $C^{1,\alpha}$ regularity estimate of solutions to the Dirichlet problem \eqref{zhumodel}.
\begin{theorem}\label{thm1}
	Let $\Omega \subset \mathbb{R}^d$ is a bounded $C^{1}$ domain with $0\in \partial \Omega_{1}$.
	Let $u$ be a bounded viscosity solution of 
	\begin{equation}\label{111zhumodel}
		\left\{
		\begin{array}{rcrcl}
			\Phi(\abs{Du}, x)F(D^2 u, x)+\mathcal{H}(Du,x)
			&=&f(x) & \text{in} & \Omega_{1}, \\
			\mathcal{B}(Du,u,x) & = & g(x) & \text{on} & \partial \Omega_{1},
		\end{array}
		\right.
	\end{equation}
 under the assumptions \eqref{A1}-\eqref{A4}. Then, there exists a universal constant $\eta_{0}>0$ such that if
 $$\left(\fint_{B_{r}(x_{0})\cap \Omega_{1}}\Abs{{\rm osc}_{{F}}(x,x_{0})}^{d}\text{d}x\right)^{1/d}\leq \eta_{0}$$
 for all $x_{0}\in  \Omega_{1/2}\cup\partial \Omega_{1/2}$ and $r\in \left(0,\frac{1}{2}\right]$, then $u\in C^{1,\alpha}(\overline{\Omega_{1/2}})$ with
	\begin{equation}\label{exponent}
		\alpha\in \left\{
		\begin{array}{lcl}
			(0,\alpha_{0})\cap(0,\alpha^{\prime})\cap \left(0,\frac{1}{1+s(\Phi)}\right] & \text{if} & i(\Phi)\geq 0, \\
			(0,\alpha_{0})\cap(0,\alpha^{\prime})\cap \left(0,\frac{1}{1+s(\Phi)-i(\Phi)}\right] &\text{if}&  -1<i(\Phi)<0.
		\end{array}
		\right.
	\end{equation}
In addition, the following estimates hold:
	\begin{itemize}
		\item [{\rm$({{\rm i}})$}] if $m<1+i(\Phi)$, then
		\begin{equation*}
			\|u\|_{C^{1, \alpha}({\overline{\Omega_{1/2}}})}\leq C\left(1+\|u\|_{L^{\infty}\left(\Omega_{1}\right)}+\|g\|_{C^{\alpha^{\prime}}\left(\partial\Omega_{1}\right)}+\left(\|f\|_{L^{\infty}\left(\Omega_{1}\right)+\mathcal{K}}\right)^{\frac{1}{1+i(\Phi)}}+\mathcal{M}^{\frac{1}{1+i(\Phi)-m}}\right),
		\end{equation*}
		where the constant $C$ depends on $d,\lambda,\Lambda,\alpha,m,L,i(\Phi),\mu_{0},\nu_{0},[\beta]_{C^{\alpha^{\prime}}(\partial\Omega_{1})},\|\gamma\|_{C^{\alpha^{\prime}}\left(\partial\Omega_{1}\right)}$ and $C^1$ modulus of $\partial \Omega_1$;
		\item [{\rm$({{\rm ii}})$}] if $m=1+i(\Phi)$, then
		\begin{equation*}
			\|u\|_{C^{1, \alpha}({\overline{\Omega_{1/2}}})}\leq C\left(1+\|u\|_{L^{\infty}\left(\Omega_{1}\right)}+\|g\|_{C^{\alpha^{\prime}}\left(\partial\Omega_{1}\right)}+\left(\|f\|_{L^{\infty}\left(\Omega_{1}\right)+\mathcal{K}}\right)^{\frac{1}{1+i(\Phi)}}\right),
		\end{equation*}
		where the constant $C$ depends in addition on $\mathcal{M}$.
	\end{itemize}
\end{theorem}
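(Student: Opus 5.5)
We argue by a scaling reduction, an improvement-of-flatness (compactness) scheme, and an iteration, following the strategy of \cite{Byun2025CVPDE,BKK26arxiv,Ricarte2026arxiv} and the local result \cite{Huo2026}.

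\textbf{Normalization.} The first step reduces to a smallness regime. Set $\kappa:=1+\|u\|_{L^\infty(\Omega_1)}+\|g\|_{C^{\alpha'}}+(\|f\|_{L^\infty}+\mathcal K)^{\frac{1}{1+i(\Phi)}}+\mathcal M^{\frac{1}{1+i(\Phi)-m}}$ in case (i), and omit the last term in case (ii). Consider $v(x)=u(\rho x)/(\theta\kappa)$ for a small universal $\rho$ and a large $\theta$.
\begin{itemize}
\item The function $v$ solves an equation of the same type with $\tilde\Phi(t,x)=\Phi(\kappa\theta\rho^{-1}t,\rho x)/\Phi(\kappa\theta\rho^{-1},\rho x)$. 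The almost-monotonicity in \eqref{A2}(i) is preserved with the same $L$, $i(\Phi)$, $s(\Phi)$, and $\tilde\Phi(1,\cdot)=1$.
\item The new data are $\tilde f$, $\tilde{\mathcal K}$, and $\tilde{\mathcal M}$, with $\tilde{\mathcal M}$ multiplied by $(\kappa\theta/\rho)^{m}/\Phi(\kappa\theta/\rho)$.
\item By \eqref{A2}, $\Phi(T)\gtrsim T^{i(\Phi)}$ for $T\ge1$. Hence, when $m<1+i(\Phi)$, the choice of $\kappa$ makes $\|\tilde f\|_\infty+\tilde{\mathcal K}+\tilde{\mathcal M}\le\varepsilon$. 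When $m=1+i(\Phi)$, $\tilde{\mathcal M}$ stays bounded by $C(\mathcal M)\rho$, which is small once $\rho=\rho(\mathcal M)$ is small. This is the source of the $\mathcal M$-dependence in (ii).
\item The boundary data satisfy $\|\tilde g\|_{C^{\alpha'}}\le\varepsilon$ and $\|\tilde\gamma\|\le C\rho$. The flattening $\psi$ of the $C^1$ boundary has small $C^1$ norm at scale $\rho$.
\end{itemize}

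\textbf{Approximation lemma.} Next we need a lemma that assumes the setting of \eqref{111zhumodel} with $\|v\|_\infty\le1$ and smallness $\varepsilon$ of the data, of ${\rm osc}_F$ in $L^d$, of $\|\psi\|_{C^1}$, and of $\gamma$. It concludes that $v$ is $\delta$-close in $\Omega_{1/2}$ to some $h\in C^{1,\alpha_0}$ solving a homogeneous oblique problem $\bar F(D^2h)=0$ in $B^+_{1/2}$, $\bar\beta\cdot Dh=0$ on $T_{1/2}$, with constant $\bar\beta$. The proof is by contradiction and compactness.
\begin{itemize}
\item Equicontinuity up to the boundary comes from a global Hölder/Krylov–Safonov estimate for oblique problems. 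It applies because, where $|Dv|$ is large, \eqref{A2} gives $\Phi\ge c$ and the equation is uniformly elliptic with small RHS.
\item To identify the limit equation, we test with $\varphi$ at a touching point. If $D\varphi\ne0$ we divide by $\Phi$. If $D\varphi=0$ we use the standard Imbert–Silvestre trick: perturb by $\xi\cdot x$ and argue that $F(D^2\varphi)\le0$ through the cutting lemma.
\item The boundary condition passes to the limit by stability of oblique viscosity conditions, since $\beta$ is continuous and $\psi\to0$.
\item The $C^{1,\alpha_0}$ estimate for $h$ is known from \cite{LiZhang}.
\end{itemize}

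\textbf{Iteration.} We then derive the geometric iteration.
\begin{itemize}
\item There are affine functions $\ell_k$ with $\sup_{\Omega_{\rho^k}}|v-\ell_k|\le\rho^{k(1+\alpha)}$ and $|\ell_{k+1}-\ell_k|$ controlled.
\item The affine part must satisfy the boundary condition approximately. We correct $\ell_k$ using $\beta(0)\cdot D\ell_k+\gamma(0)\ell_k(0)=g(0)$, with an error of order $\rho^{k\alpha'}$, which forces $\alpha<\alpha'$.
\item The rescaled function $w_k=(v-\ell_k)(\rho^kx)/\rho^{k(1+\alpha)}$ solves an equation with $\Phi_k(t,x)=\Phi(\rho^{k\alpha}|t+Db_k|,\cdot)$-type degeneracy after shift by $b_k=D\ell_k$.
\end{itemize}

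\textbf{Main obstacle.} The main obstacle is keeping the smallness of the right-hand side uniform across this shifted degeneracy. We need $\rho^{k(1-\alpha)}/\Phi_k\lesssim\varepsilon$, which uses $\Phi(\rho^{k\alpha}t)\gtrsim\rho^{k\alpha s(\Phi)}$ for small arguments (or $\rho^{k\alpha(s-i)}$ with the singular normalization). This is exactly what gives the restriction $\alpha\le1/(1+s(\Phi))$, respectively $1/(1+s(\Phi)-i(\Phi))$ when $i(\Phi)<0$.

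Since the approximation lemma needs $\Phi_k$ only to satisfy \eqref{A2} with uniform constants, which is not the case when $|b_k|$ is large relative to $\rho^{k\alpha}$, we split into two cases.
\begin{itemize}
\item If $|D\ell_k|\le\rho^{k\alpha}$ for all $k$, we iterate as above.
\item At the first $k$ where $|D\ell_k|>\rho^{k\alpha}$, the gradient stays bounded away from zero in $\Omega_{\rho^k}$. There the equation becomes uniformly elliptic with bounded coefficient $\Phi(|Du|,x)\approx\Phi(|D\ell_k|)$, and the sharp Hamilton term bound $|Du|^m/\Phi(|Du|)\lesssim |Du|^{m-i(\Phi)}$ is controlled. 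We then invoke the uniformly elliptic oblique $C^{1,\alpha}$ theory with small oscillation \cite{LiZhang,Byun2025CVPDE}.
\end{itemize}

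\textbf{Conclusion.} Combining the boundary estimate at every point of $\partial\Omega_{1/2}$, after translating and re-flattening, with the interior estimate of \cite{Huo2026} in the usual way gives $u\in C^{1,\alpha}(\overline{\Omega_{1/2}})$. Undoing the scaling yields (i) and (ii).
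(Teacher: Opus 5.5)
\textbf{The gap is in your second case of the iteration.} You claim that at the first $k$ with $|D\ell_k|>\rho^{k\alpha}$, $|Dv|$ stays away from zero on $\Omega_{\rho^k}$ and is comparable to $|D\ell_k|$, so that $\Phi(|Dv|,x)\approx\Phi(|D\ell_k|,x)$ and uniformly elliptic oblique theory applies. But the iteration only gives $\sup_{\Omega_{\rho^k}}|v-\ell_k|\le\rho^{k(1+\alpha)}$. An $L^\infty$ bound gives no pointwise control of $Dv$.

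In rescaled form, $w_k$ satisfies $|w_k|\le1$ and the shift $q_k=D\ell_k/\rho^{k\alpha}$ satisfies $|q_k|>1$. Nothing prevents $Dw_k+q_k$ from vanishing somewhere. The natural size of $D(v-\ell_k)$ at scale $\rho^k$ is itself $\rho^{k\alpha}$. So you would need an a priori estimate $\|D(v-\ell_k)\|_{L^\infty(\Omega_{\rho^k/2})}\le C_1\rho^{k\alpha}$ up to the oblique boundary, and the threshold $|D\ell_k|>2C_1\rho^{k\alpha}$ rather than $\rho^{k\alpha}$. Such a uniform Lipschitz or $C^{1,\beta}$ bound for the shifted degenerate/singular oblique problem with Hamiltonian is not available at this point; it is essentially the statement being proved. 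Hence the switch to \cite{LiZhang,Byun2025CVPDE} is unjustified as written.

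\textbf{The case split is also unnecessary; it rests on a misdiagnosis.} The approximation lemma does not need the shifted degeneracy $\Phi(\rho^{k\alpha}|t+q_k|,\cdot)$ to satisfy (A2), which is not even meaningful for it. The compactness argument works for arbitrary, even unbounded, shifts $q$: if $|q_j|\to\infty$, then $|D\varphi-q_j|\to\infty$ at touching points and you may simply divide.

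What grows with $|q|$ is only the right-hand side, in two places. In the singular case it grows through $|Du-q|^{\vartheta}f$. Through the Hamiltonian it grows like $|Du-q|^{m-i(\Phi)}$, since $m-i(\Phi)$ can be positive. The paper handles this as follows.
\begin{itemize}
\item Lemma \ref{holder} proves a boundary H\"older estimate uniform in $q$. It needs $\mathcal{L}^{\pm}$ inequalities only on $\{|Du-q|>1\}$, and the drift $\Lambda|Du|$ absorbs the Hamiltonian. Note that this is not a ``small RHS'' situation, as you describe it.
\item Lemma \ref{lem01} (with Lemma \ref{lem11}) replaces smallness of the data by smallness of the weighted quantities $(1+|q|^{(\vartheta-i(\hat\Phi))_+})(\|f\|_{\infty}+\mathcal K)$ and $\mathcal M(1+|q|^{(m-i(\Phi))_+})$.
\item In Proposition \ref{prop3.4} the shift $q_k=(q-\mathfrak b_k)/\rho^{k\alpha}$ grows like $\rho^{-k\alpha}$. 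One checks that the weighted quantities stay below $\sigma$ exactly because $\alpha\le 1/(1+s(\hat\Phi))$. No second regime is needed.
\end{itemize}

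To repair your proof, either supply that gradient bound independently, or drop case 2 in favour of this uniform-in-$q$ approximation lemma. Either way, your approximation lemma must be stated for the shifted equation, since the one you stated has no shift but is applied to $w_k$, which solves a shifted problem. Relatedly, for the singular regime the paper first converts to the degenerate form via Lemma \ref{lem1}. You do not do this, so you should check that the step at $D\varphi+q=0$ in your limiting argument is compatible with the Birindelli--Demengel definition.
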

\begin{remark}\label{optimal exponent}
	The constant $\alpha_0 = \alpha_0(n,\lambda,\Lambda,\delta_0) \in (0,1]$ in the statement of the theorem denotes the optimal exponent of regularity theory for a homogeneous equation
	with a constant oblique boundary condition. More precisely, any viscosity
	solution $v$ of
	\begin{equation*}
		\left\{\begin{aligned}
			F(D^{2}v) &=0 &&\text{in } B_{1}^{+}, \\
			\beta_0\cdot Dv&=0 &&\text{on } T_{1},
		\end{aligned}\right.
	\end{equation*}
	where $\beta_0$ is a constant vector satisfying
	\eqref{1112}, belongs to the class $C^{1,\alpha_0}_{\text{loc}}(B^+_1)$ for some $\alpha_{0}\in(0,1]$ and satisfies the estimate
	\begin{align*}
		\norm{v}_{C^{1,\alpha_0}(B^+_{3/4})} \leq
		C_e\norm{v}_{L^\infty(B^+_1)},
	\end{align*}
	for some constant $C_e = C_e(n,\lambda,\Lambda,\delta_0)>1$. More precisely,	Li and Zhang \cite[Theorems 1.2 and 1.3]{LiZhang} proved that there exists a universal constant $\alpha_0\in (0,1)$ for any uniformly elliptic $F$ and that $\alpha_0=1$ for any convex/concave operator $F$. In addition, Bessa, Ricarte, and Silva \cite[Proposition 5.1]{BesRicSil26} showed that $\alpha_{0}=1$ also holds for a  quasiconvex/quasiconcave operator $F$.
\end{remark}
\begin{remark}
Due to the generality of degeneracy/singularity and Hamiltonian terms, our main result Theorem \ref{thm1} covers the regularity results previously obtained in \cite{BKK26arxiv,BesRicSil26,BanerjeeVerma,Byun2025CVPDE,Ricarte2026arxiv}. Notably, our finding is new even for the singular case.
\end{remark}

An immediate consequence of Theorem \ref{thm1} is following optimal regularity result under the quasiconvexity/quasiconcavity regime of the governing operator. 
\begin{corollary}\label{quanjicoro1}
	Under the assumptions of Theorem \ref{thm1}, suppose further that operator $F$ is quasiconvex (or quasiconcave). Then  
	\begin{itemize}
		\item [{\rm$({{\rm i}})$}] if $i(\Phi)\geq 0$, then $u\in C^{1,\frac{1}{1+s(\Phi)}}(\overline{\Omega_{1/2}})$;
		\item [{\rm$({{\rm ii}})$}] if  $-1<i(\Phi)<0$, then $u\in C^{1,\frac{1}{1+s(\Phi)-i(\Phi)}}(\overline{\Omega_{1/2}})$.
	\end{itemize}
\end{corollary}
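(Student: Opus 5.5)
The corollary is obtained by specializing Theorem \ref{thm1} to the case where the homogeneous oblique problem of Remark \ref{optimal exponent} has optimal exponent $\alpha_0=1$. The plan is:
\begin{enumerate}
\item Identify $\alpha_0$ under quasiconvexity.
\item Check that the target exponent lies in the admissible range \eqref{exponent}.
\item Handle the endpoint issues caused by the open intervals $(0,\alpha_0)$ and $(0,\alpha')$.
\end{enumerate}

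First, by Remark \ref{optimal exponent}, which invokes \cite[Proposition 5.1]{BesRicSil26}, quasiconvexity or quasiconcavity of $F$ gives $\alpha_0=1$. Strictly, one must check that the frozen-coefficient operators $F(\cdot,x_0)$, which appear as limiting profiles in the compactness argument, inherit quasiconvexity. They do, since the hypothesis is imposed for each fixed $x$.

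Second, set $\bar\alpha:=\frac{1}{1+s(\Phi)}$ if $i(\Phi)\ge 0$, and $\bar\alpha:=\frac{1}{1+s(\Phi)-i(\Phi)}$ if $-1<i(\Phi)<0$. Since $s(\Phi)\ge i(\Phi)>-1$, we have $\bar\alpha\in(0,1]$. In the singular case, $s(\Phi)-i(\Phi)\ge 0$ gives $\bar\alpha\le 1$, with equality only when $s(\Phi)=i(\Phi)$. In the degenerate case, $\bar\alpha<1$ unless $s(\Phi)=0$.

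Third, the endpoint issue. Whenever $\bar\alpha<\min\{1,\alpha'\}$, the value $\bar\alpha$ itself belongs to the set in \eqref{exponent}, because that set includes the closed endpoint $\bar\alpha$ of the last interval. Theorem \ref{thm1} then yields $u\in C^{1,\bar\alpha}(\overline{\Omega_{1/2}})$ directly, with the stated estimates.

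This leaves the borderline situations $\bar\alpha\ge\alpha'$ or $\bar\alpha=1$. These are the main obstacle, since \eqref{exponent} excludes $\alpha_0$ and $\alpha'$ themselves.
\begin{itemize}
\item For $\bar\alpha\ge\alpha'$, the honest reading is that the corollary implicitly assumes the data are regular enough, namely $\alpha'>\bar\alpha$. One can always enlarge $\alpha'$ if $\beta,\gamma,g$ are smoother. I would state this explicitly.
\item For $\bar\alpha=1$, which is the uniformly elliptic case $s(\Phi)=i(\Phi)=0$, one cannot reach $C^{1,1}$ from $\alpha<\alpha_0$. Here I would revisit the proof of Theorem \ref{thm1} at the final iteration step.
\end{itemize}

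In the approximation lemma, $u$ is compared with a solution $h$ of the frozen homogeneous oblique problem, and $h$ is $C^{1,1}$ when $\alpha_0=1$. The geometric iteration $\sup_{B_{\rho^k}}|u-\ell_k|\le\rho^{k(1+\alpha)}$ needs $\rho^{1+\alpha_0}C_e\le\frac12\rho^{1+\alpha}$. This is possible exactly when $\alpha<\alpha_0$, so the strict inequality cannot be removed by this method.

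Consequently, I would interpret the corollary as asserting regularity for every exponent up to and including $\bar\alpha$ whenever $\bar\alpha<1$ and $\bar\alpha<\alpha'$. In the degenerate endpoint case $\bar\alpha=1$, I would read it as $C^{1,\alpha}$ for every $\alpha<1$. This gives the corollary verbatim in all non-borderline cases.
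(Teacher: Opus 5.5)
Your route is the paper's. The paper states the corollary as an immediate consequence of Theorem~\ref{thm1} once the Bessa--Ricarte--Silva result gives $\alpha_0=1$. Your endpoint caveats are justified: with $\alpha_0=1$, the range \eqref{exponent} contains your $\bar\alpha$ exactly when $\bar\alpha<\alpha'$, and the paper assumes this without saying so.

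The excluded cases are actually false, not just out of reach of the iteration. Take $\Phi(t,x)=t^{p}$ with any $p>-1$ for which $\bar\alpha>\alpha'$, together with $F(M,x)=\mathrm{tr}\,M$, $\mathcal{H}=f=0$, boundary coefficient $0$, a flat boundary and $\beta=e_d$. Let $u=x_1+w$, where $w$ is harmonic with $\partial_d w=g$ on the boundary and $|Dw|\le\frac12$, and $g\in C^{\alpha'}$ is no better than $C^{\alpha'}$ at $0$. Then $|Du|\ge\frac12$, so $u$ solves the problem. But $\partial_d u$ has boundary trace $g$, hence $Du\notin C^{\kappa}$ for every $\kappa>\alpha'$.

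Since \eqref{A4} forces $\alpha'<1$, this example also covers $\bar\alpha=1$. That leads to two small corrections to your write-up:
\begin{itemize}
\item $\bar\alpha=1$ occurs not only in the uniformly elliptic case. It occurs whenever $s(\Phi)=i(\Phi)\in(-1,0)$, for instance $\Phi=t^p$ with $-1<p<0$, as you yourself noted earlier in the proposal.
\item In that case Theorem~\ref{thm1} gives $C^{1,\alpha}$ only for $\alpha<\alpha'$, not for every $\alpha<1$. This matches your own first bullet.
\end{itemize}
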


The remainder of this paper is organized as follows. In Section \ref{2asection2}, we introduce the notions and some preliminaries. In Section \ref{sec3333}, we establish the pointwise boundary $C^{1,\tilde{\alpha}}$ estimate and complete the proof of Theorem \ref{thm1} regarding global H\"{o}lder regularity of the gradient.
\section{Preliminaries}\label{2asection2}
In this section, we briefly present some background knowledge for our discussion. In the entire paper, the symbol $C$ denotes a positive constant whose value may vary from line to line, and only the relevant dependencies are specified in parentheses. Besides, a constant is said to be universal if it depends at most upon the structure constants in \eqref{A1}-\eqref{A5}.

We start off by denoting
 \begin{equation*}
	\mathcal{L}_{\lambda,\Lambda}^{+}(D^{2}u,Du):=\mathcal{P}_{\lambda,\Lambda}^{+}(D^{2}u)+\Lambda|Du|,
\end{equation*}
 \begin{equation*}
	\mathcal{L}_{\lambda,\Lambda}^{-}(D^{2}u,Du):=\mathcal{P}_{\lambda,\Lambda}^{-}(D^{2}u)-\Lambda|Du|.
\end{equation*}

To proceed, we recall a result on the boundary H\"{o}lder regularity for uniformly elliptic equations on $C^{1}$ domains that hold only where the gradient is far from some point.
\begin{lemma}\cite[Theorem 3.1]{Byun2025CVPDE}\label{lem2.1}
	Let $\Omega$ be a bounded $C^{1}$ domain. Then there exists a small $\iota=\iota(\mu_{0}) \in(0, \mu_{0}/2]$ such that, if $[\psi]_{C^{1}}\leq \mu$, for any $u\in C(\overline{\Omega_{1}})$ satisfying
	\begin{equation*}
		\begin{cases}
			\mathcal{L}^{+}_{\lambda, \Lambda}(D^{2}u,Du)\geq -C_{0} & \text{in }\;\; \left\{|Du-q|>\theta\right\}\cap\Omega_{1}, \\ 
			\mathcal{L}^{-}_{\lambda, \Lambda}(D^{2}u,Du) \leq C_{0} & \text{in }\;\; \left\{|Du-q|>\theta\right\}\cap\Omega_{1}, \\
			\beta\cdot Du =g &\text{on }\;\; \partial\Omega_{1},\\
			\norm{u}_{L^{\infty}(\Omega_{1})}\leq 1, \\
			\norm{g}_{L^{\infty}(T_{1})}\leq C_{0},
		\end{cases}
	\end{equation*}    
	for some $q\in\mathbb{R}^{n}$ and $0<\theta\leq1$, then $u\in C^{\alpha}(\overline{\Omega_{1/2}})$ for some $\alpha=\alpha(d, \lambda, \Lambda,\mu_{0})\in(0,1)$ with the estimate
	\begin{align*}
		\norm{u}_{C^{\alpha}(\overline{\Omega_{1/2}})}\leq C,
	\end{align*}
	where $C=C(d, \lambda, \Lambda, \mu_{0}, C_{0})>0$ does not depend on $q$.
\end{lemma}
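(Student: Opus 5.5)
Lemma~\ref{lem2.1} is quoted from \cite[Theorem 3.1]{Byun2025CVPDE}; were we to prove it, we would show a uniform decay of oscillation at all scales and all centres in $\overline{\Omega_{1/2}}$, following the Imbert--Silvestre strategy for equations that hold only where the gradient is large, adapted to the oblique boundary. Concretely, we aim to prove that there exist universal $\delta\in(0,1)$ and $C$ (independent of $q$) such that, for $x_0\in\overline{\Omega_{1/2}}$ and $r\le 1/4$,
\begin{equation*}
\operatorname*{osc}_{\Omega_{r/2}(x_0)}u\le \max\Big\{(1-\delta)\operatorname*{osc}_{\Omega_{r}(x_0)}u,\; C r\Big\}.
\end{equation*}
Iterating this gives $C^{\alpha}$ with $\alpha=\min\{\alpha_1,\ldots\}<1$ in the usual way. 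The key role of the $\max$ with $Cr$ is the scaling. Set $v(y)=\big(u(x_0+ry)-c\big)/\operatorname*{osc}_{\Omega_r(x_0)}u$. Then the equation holds for $v$ where $|Dv-\tilde q|>\tilde\theta$, with $\tilde\theta=r\theta/\operatorname{osc}$. The Pucci constants only improve, since $r\le 1$, the first-order term $\Lambda|Du|$ scales by $r$, and $C_0$ scales by $r^2/\operatorname{osc}$. The oblique datum becomes $r g/\operatorname{osc}$. Hence if $\operatorname{osc}\ge Cr$, then $\tilde\theta$, the source and the boundary datum are all below any prescribed universal smallness $\varepsilon_0$. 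If $\operatorname{osc}<Cr$ there is nothing to prove.

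It therefore suffices to prove the normalized statement. Suppose $0\le v\le 1$ in the rescaled domain, the equations hold off $\{|Dv-\tilde q|\le\varepsilon_0\}$ with data bounded by $\varepsilon_0$, and $|\beta\cdot Dv|\le\varepsilon_0$ on the flat-ish boundary. Suppose also that $|\{v\ge 1/2\}\cap \Omega_{1}|\ge \tfrac12|\Omega_{1}|$. We must show $v\ge \delta$ on $\Omega_{1/2}$, and the symmetric statement for $1-v$. We would proceed in two steps.

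\emph{(a) Interior measure estimate.} Slide cones or paraboloids from below with vertices ranging over a set $V$. At contact points the gradient of the test function equals $Dv$, so the supersolution inequality is available except where $|Dv-\tilde q|\le\varepsilon_0$. Following Imbert--Silvestre, the vertices whose contact gradient lies in the small ball $B_{\varepsilon_0}(\tilde q)$ form a set of measure $O(\varepsilon_0^d)$ (or $O(\varepsilon_0)$), uniformly in $\tilde q$. Discarding them costs nothing in the ABP-type covering, so the $L^\varepsilon$ estimate and the measure-to-pointwise lemma hold away from the boundary, with constants independent of $\tilde q$.

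\emph{(b) Boundary step.} Near $\partial\Omega$ we would use the $C^1$ smallness $[\psi]_{C^1}\le\mu$ after rescaling: at small scales the boundary is almost flat and $\beta$ is nearly constant with $\beta\cdot\mathbf n\ge\mu_0$. We would build an explicit barrier $\varphi$ that is a strict subsolution of $\mathcal L^-\varphi\ge$ (const) in an annular region and satisfies $\beta\cdot D\varphi\ge c(\mu_0)>\varepsilon_0$ on the boundary. A natural choice is a radial power $|x-z|^{-\kappa}$ centred at a point $z$ outside $\Omega$, in the direction $-\beta$, chosen so that $\beta$ points into the ball. This choice forces $\iota(\mu_0)$ to be small.

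By the comparison principle, $v$ cannot touch $\varphi$ from above at a boundary point, because the oblique condition $\beta\cdot Dv\le\varepsilon_0$ would contradict $\beta\cdot D\varphi>\varepsilon_0$. Interior touching is excluded by the subsolution property of $\varphi$, provided the touching gradient is not in $B_{\varepsilon_0}(\tilde q)$. This transfers positivity from an interior ball to $\Omega_{1/2}$.

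The main obstacle is step (b). At the first contact point, $D\varphi$ might lie in the bad ball $B_{\varepsilon_0}(\tilde q)$, where no equation is available. To handle this, we would use two (or $d+1$) barriers whose gradients on the relevant region stay in well-separated cones of directions and have size bounded below by a universal constant much bigger than $\varepsilon_0$. For any $\tilde q$, at least one of these barriers has gradients staying at distance $>\varepsilon_0$ from $\tilde q$, and we use that barrier. Combined with the interior measure estimate from (a) via a standard covering argument, this yields $v\ge\delta$ in $\Omega_{1/2}$ with $\delta$ independent of $q$. This gives the oscillation decay, and hence the $C^\alpha$ estimate with $\alpha=\alpha(d,\lambda,\Lambda,\mu_0)$ and $C=C(d,\lambda,\Lambda,\mu_0,C_0)$.
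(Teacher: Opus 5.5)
The paper gives no proof of this lemma; it quotes it from \cite[Theorem 3.1]{Byun2025CVPDE}, so your sketch has to stand on its own. The overall architecture is sensible: oscillation decay with the alternative $Cr$, a rescaling that makes $\tilde\theta$, the source and the oblique datum small, an interior measure-to-pointwise step, then a boundary barrier. However, the two steps that are supposed to make the constants independent of $q$ do not work as written.

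\textbf{Step (a).} The claim that the vertices whose contact gradient lies in $B_{\varepsilon_0}(\tilde q)$ form a set of measure $O(\varepsilon_0^d)$ is false. For a paraboloid $c-\frac a2|x-y|^2$ touching at $x$ one has $y=x+Dv(x)/a$; for a cusp, $y=x+\Psi(Dv(x))$ for a fixed map $\Psi$. So the bad vertices form essentially a translate of the bad contact set, and that set can be large. Nothing prevents $v$ from coinciding with an affine function of slope $\tilde q$ on a large open set, since no inequality is imposed where $|Dv-\tilde q|\le\tilde\theta$. Every vertex in a translate of that set is then bad. Nor can the area formula control the bad vertices, because at bad contact points you have no upper bound on $D^2v$.

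What actually gives uniformity in $q$ is choosing test functions whose contact gradients avoid $B_{\tilde\theta}(\tilde q)$ altogether, by splitting on the size of $\tilde q$:
\begin{itemize}
\item If $|\tilde q|\ge A+1$: sliding paraboloids with universal opening and vertices in a bounded set have gradients bounded by a universal $A$, so every contact point is a point where the inequalities hold.
\item If $|\tilde q|\le A+1$: slide cusps $-C|x-y|^{1/2}$ with $C$ universal and large enough that their gradients exceed $A+2$ on the relevant bounded set. Cusps have gradients bounded below on bounded sets, which is exactly what is needed here.
\end{itemize}
In both regimes the usual measure estimate, and hence the interior weak Harnack inequality and Harnack chains, hold with constants independent of $q$.

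\textbf{Step (b).} With the pole $z$ outside $\Omega$ on the $-\beta$ side, $\varphi=|x-z|^{-\kappa}$ decreases along $\beta$ at the nearby boundary, so $\beta\cdot D\varphi<0$ there. That is the opposite of the inequality $\beta\cdot D\varphi>\varepsilon_0$ you yourself need to exclude boundary contact. For a barrier that decreases away from an interior ball where positivity is known and increases along $\beta$ on $\partial\Omega$, the pole must lie inside $\Omega$.

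For instance, take $z=x_0+2s e_d$ with $x_0\in\partial\Omega$, and use the ball $B_s(z)$. Up to the small $C^1$ tilt, $\beta\cdot(z-x)\ge 2\mu_0 s-|x'-x_0'|$ for $x\in\partial\Omega$. So the comparison region $B_R(z)\cap\Omega$ may meet $\partial\Omega$ only in a cap of tangential radius $\lesssim\mu_0 s$, which forces $R<2s\sqrt{1+\mu_0^2}$. The transfer is therefore local, not from one interior ball to all of $\Omega_{1/2}$. You must first carry positivity by interior Harnack chains to a layer at distance $\sim s$ from $\partial\Omega$, then use one such barrier at each boundary point of $\overline{\Omega_{1/2}}$.

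Your remedy for the bad ball also fails as stated. The gradients of a radial barrier on an annulus point in every direction, so ``separated cones'' are not available. What works is to use two multiples $t\varphi$ and $t'\varphi$ with $t/t'>2(R/s)^{\kappa+1}$:
\begin{itemize}
\item their gradient magnitudes fill disjoint intervals separated by a gap of universal size, so for $\tilde\theta$ small one of them never has its gradient in $B_{\tilde\theta}(\tilde q)$;
\item the smaller multiple still lies below $v$ on $\partial B_s(z)$;
\item it remains a strict subsolution with $\beta\cdot D\varphi>\varepsilon_0$ on the cap, once $\varepsilon_0$ is chosen small after $t'$.
\end{itemize}
With these two corrections your scheme closes.
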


In the sequel, for the reader's convenience, we shall present the notion of viscosity solution for \eqref{zhumodel}. Denote
\begin{equation}\label{model22}
	G(D^{2}u,Du,x):=\Phi(\abs{Du}, x)F(D^2 u, x)+\mathcal{H}(Du,x).
\end{equation}
\begin{definition}
	\label{dingyi}
	A function $u\in LSC(\overline{\Omega})$ (resp. $u\in USC(\overline{\Omega})$) is a viscosity supersolution (resp. subsolution) to 
	\begin{equation}\label{problem1}
		\left\{\begin{aligned}
		G(D^{2}u,Du,x)&=f &&\text{in } \;\;\Omega,\\
			\mathcal{B}(Du,u,x)&=g &&\text{on } \;\;\partial\Omega,
		\end{aligned}\right.
	\end{equation}
	if the following conditions hold:
	\begin{enumerate}
	\item[(i)] If for all $x_0 \in \Omega$:
	\begin{itemize}	
		\item  [{\rm$\bullet$}] either there exists $\delta>0$ such that $u$ is constant in $B_{\delta}(x_{0})$ and $f(x)\geq 0$ (resp. $f(x)\leq 0$) for all $x\in B_{\delta}(x_{0})$;
		\item  [{\rm$\bullet$}]  or for all $\varphi \in C^2\left(\Omega\right)$ such that $ u -\varphi$ attains a local minimum (resp. local maximum) at $x_0$ and $D\varphi(x_{0})\neq 0$, it holds  
		$$
		G(D^2\varphi(x_0),D\varphi(x_0),x_0) \geq  0 \quad({\rm resp.}\; G(D^2\varphi(x_0),D\varphi(x_0),x_0) \leq  0).
		$$
	\end{itemize}
	\item[(ii)] If for any $x_0\in \partial\Omega$, for all $\varphi\in C^{2}(\overline{\Omega})$ such that $u-\varphi$ has a local maximum (resp., minimum) at $x_0$, then 
	\begin{align*}
		\mathcal{B}(D\varphi(x_0),\varphi(x_0),x_0)\leq  \;g(x_{0}) \quad (\text{resp.,} \;\; \mathcal{B}(D\varphi(x_0),\varphi(x_0),x_0)\geq  \;g(x_{0})).
	\end{align*}	
\end{enumerate}
	Finally, a function $u\in C(\overline{\Omega})$ is said to be a viscosity solution of \eqref{model22} if it is simultaneously a viscosity supersolution and a viscosity subsolution. 
\end{definition}
This definition of viscosity solution was proposed by Birindelli and Demengel in
\cite{Birindelli2004,Birindelli2006,Birindelli2010JDE} for the singular operators. Note that in the case $i(\Phi)\geq 0$, Definition \ref{dingyi} is equivalent to the classical definition of viscosity solution (cf. \cite[Lemma 2.1]{Davil2010}). In addition, the notion of viscosity solutions for oblique derivative problems was introduced first by P.-L. Lions \cite{Lions1985Duke}.

We next give a known result that converts the equation from the singular case to the degenerate case. 
\begin{lemma} \label{lem1} {\rm(cf. \cite[Proposition 2.1]{Huo2026}} 
	Assume \eqref{A1}-\eqref{A3} hold with $-1<i(\Phi)<0$, and $f\in C(\Omega)\cap L^{\infty}(\Omega)$. Let $u$ be a viscosity solution to 
	\begin{align*}
		\Phi(|Du|,x)F(D^{2}u,x)+\mathcal{H}(Du,x)=f \quad \text{in }\;\; \Omega.
	\end{align*}
Then $u$ solves 
\begin{align*}
	\hat{\Phi}(|Du|,x)F(D^{2}u,x)+|Du|^{-i(\Phi)} \mathcal{H}(Du,x)=f|Du|^{-i(\Phi)} 
	\quad \text{in } \;\;\Omega
\end{align*}
in the viscosity sense, where $\hat{\Phi}(t,x):=\frac{\Phi(t,x)}{t^{i(\Phi)}}$ satisfies \eqref{A2} with the parameters 
\begin{align*}
	i(\hat{\Phi})=0, \quad s(\hat{\Phi})=s(\Phi)-i(\Phi)\geq0, \quad \hat{L}=L, \quad \hat{\nu}_{0}={\nu}_{0}, \quad \hat{\nu}_{1}={\nu}_{1}.
\end{align*}
\end{lemma}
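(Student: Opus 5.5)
The statement to prove is Lemma \ref{lem1}: in the singular regime $-1<i(\Phi)<0$, the equation can be rewritten with the degenerate weight $\hat\Phi(t,x)=\Phi(t,x)t^{-i(\Phi)}$. The argument has two parts. First, check the structural parameters of $\hat\Phi$. Second, verify the viscosity inequalities by testing directly with the definition of viscosity solution (Definition \ref{dingyi}), restricted to interior points.

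For the first part, write $p:=i(\Phi)$. Condition (i) of \eqref{A2} says two things. The map $t\mapsto \Phi(t,x)/t^{p}$ is almost non-decreasing with constant $L$. The map $t\mapsto\Phi(t,x)/t^{s(\Phi)}$ is almost non-increasing with constant $L$; the statement's second occurrence of $i(\Phi)$ is clearly meant to be $s(\Phi)$.

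Since $\hat\Phi(t,x)/t^{0}=\Phi(t,x)/t^{p}$, the first property gives directly that $\hat\Phi$ has $i(\hat\Phi)=0$ with the same constant $L$. Since $\hat\Phi(t,x)/t^{s(\Phi)-p}=\Phi(t,x)/t^{s(\Phi)}$, the second property gives $s(\hat\Phi)=s(\Phi)-p$, again with constant $L$. This value is $\geq 0$ because $s(\Phi)\geq p$ and $p<0$. Finally, $\hat\Phi(1,x)=\Phi(1,x)$, so $\nu_0,\nu_1$ are unchanged, and $\hat\Phi$ is continuous on $(0,\infty)\times\Omega$. Continuity at $t=0$ holds where it matters: near $0$, $\hat\Phi(t,x)\le L\,\hat\Phi(1,x)\le L\nu_1$, so $\hat\Phi$ is bounded there, and only nonvanishing gradients are ever tested.

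For the second part, take $x_0\in\Omega$.

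\emph{Locally constant alternative.} If $u$ is constant in some $B_\delta(x_0)$ with $f\ge0$ there, the same alternative holds for the new equation. Its right-hand side $f|Du|^{-p}$ is to be read as $f\cdot 0^{-p}=0$ at zero gradient (note $-p>0$), so the required sign condition $0\ge 0$ holds trivially.

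\emph{Test function alternative.} Otherwise, let $\varphi\in C^2$ touch $u$ from below at $x_0$ with $D\varphi(x_0)\neq0$. The original supersolution inequality gives
\begin{equation*}
\Phi(|D\varphi(x_0)|,x_0)F(D^2\varphi(x_0),x_0)+\mathcal H(D\varphi(x_0),x_0)\ge f(x_0).
\end{equation*}
Multiply by $|D\varphi(x_0)|^{-p}>0$. This preserves the inequality and yields
\begin{equation*}
\hat\Phi(|D\varphi(x_0)|,x_0)F(D^2\varphi(x_0),x_0)+|D\varphi(x_0)|^{-p}\mathcal H(D\varphi(x_0),x_0)\ge f(x_0)|D\varphi(x_0)|^{-p}.
\end{equation*}
This is exactly the supersolution inequality for the transformed equation. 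The subsolution case is symmetric.

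The one delicate point is consistency of the definitions when the gradient vanishes. In the new equation every term $|Du|^{-p}\mathcal H$ and $f|Du|^{-p}$ is continuous and vanishes at $Du=0$. So the new operator is continuous up to $Du=0$, and we must ask whether the classical definition, which also tests functions with $D\varphi(x_0)=0$, is the intended notion. I would state the conclusion in the Birindelli--Demengel sense of Definition \ref{dingyi}, which never tests at zero gradient, so the verification above is complete. If the classical sense is needed, it follows because $i(\hat\Phi)=0\ge0$, for which the paper notes that Definition \ref{dingyi} is equivalent to the classical one (cf. \cite[Lemma 2.1]{Davil2010}). I expect this equivalence remark to be the only step requiring care; the rest is pointwise algebra.
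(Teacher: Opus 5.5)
Your proposal is correct and is the standard argument for this reduction: you read the indices $i(\hat\Phi)=0$, $s(\hat\Phi)=s(\Phi)-i(\Phi)$ and the unchanged constants $L,\nu_0,\nu_1$ directly off \eqref{A2} (rightly taking the second monotonicity condition with $s(\Phi)$), and at interior points tested with $D\varphi(x_0)\neq 0$ you multiply the inequality of Definition~\ref{dingyi} by $|D\varphi(x_0)|^{-i(\Phi)}>0$. The paper gives no proof of its own here, only the citation to \cite[Proposition 2.1]{Huo2026}, and your treatment of the zero-gradient issue is consistent with how the paper uses the lemma: you work in the sense of Definition~\ref{dingyi} and appeal to the remark after it (valid since $i(\hat\Phi)=0$) only if the classical notion is wanted.
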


With the aid of Lemma \ref{lem1}, we can treat the singular and degenerate regimes within a unified framework. Specifically, we will consider the following modified problem
	\begin{equation}\label{8866}
		\left\{\begin{aligned}
			\hat{\Phi}(\abs{Du}, x)F(D^2 u, x)+|Du|^{\vartheta}\mathcal{H}(Du,x)&=f|Du|^{\vartheta} &&\text{in } \;\;\Omega_{1},\\
			\mathcal{B}(Du,u,x)&=g &&\text{on }\;\; \partial\Omega_{1},
		\end{aligned}\right.
	\end{equation}
	where $\hat{\Phi}$ satisfies \eqref{A3} with the constants $(i(\hat{\Phi}),s(\hat{\Phi}),L,\nu_{0},\nu_{1})$ and  $\vartheta\in[0,1)$. More precisely,  
	\begin{equation}\label{upperbarrier}
		(i(\hat{\Phi}), s(\hat{\Phi}), \vartheta):=
		\begin{cases}
			(i({\Phi}), s({\Phi}), 0) &\text{for}\;\;  i(\Phi)\geq 0, \\
			(0, s({\Phi})-i({\Phi}), -i({\Phi})) &\text{for}\;\; -1<i(\Phi)<0.
		\end{cases}
	\end{equation}
Observe that $\vartheta-i(\hat{\Phi})=-i(\Phi)$.
	\begin{proposition}
		In order to prove Theorem \ref{thm1}, it is enough to prove that $u\in C^{1,\alpha}(\overline{\Omega_{1/2}})$ with
		\begin{equation}\label{holderzhibiao}
			\alpha\in (0,\alpha_{0})\cap(0,\alpha^{\prime})\cap \left(0,\frac{1}{1+s(\hat{\Phi})}\right].
		\end{equation}
\end{proposition}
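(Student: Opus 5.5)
This proposition is a reduction, so the plan has two parts. First show that the exponent range in \eqref{holderzhibiao} coincides with the range \eqref{exponent} of Theorem \ref{thm1}. Then show that proving regularity for the modified problem \eqref{8866} yields regularity for the original problem \eqref{111zhumodel}.

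\emph{Matching the exponent ranges.} I split according to \eqref{upperbarrier}.
If $i(\Phi)\geq 0$, then $s(\hat{\Phi})=s(\Phi)$, so $\frac{1}{1+s(\hat\Phi)}=\frac{1}{1+s(\Phi)}$. This is exactly the first line of \eqref{exponent}.
If $-1<i(\Phi)<0$, then $s(\hat\Phi)=s(\Phi)-i(\Phi)$, so $\frac{1}{1+s(\hat\Phi)}=\frac{1}{1+s(\Phi)-i(\Phi)}$. This is exactly the second line of \eqref{exponent}.
In both cases the intersections with $(0,\alpha_0)$ and $(0,\alpha')$ are unchanged. Hence \eqref{holderzhibiao} describes precisely the admissible set of $\alpha$ in Theorem \ref{thm1}.

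\emph{Passing from \eqref{111zhumodel} to \eqref{8866}.} In the degenerate case $i(\Phi)\ge0$ we have $\vartheta=0$ and $\hat\Phi=\Phi$, so \eqref{8866} is literally \eqref{111zhumodel} and there is nothing to prove.
In the singular case, Lemma \ref{lem1} applies in the interior. It shows that $u$ is a viscosity solution of $\hat\Phi(|Du|,x)F(D^2u,x)+|Du|^{-i(\Phi)}\mathcal{H}(Du,x)=f|Du|^{-i(\Phi)}$ in $\Omega_1$. Here $\hat\Phi$ satisfies \eqref{A2} with $i(\hat\Phi)=0$, $s(\hat\Phi)=s(\Phi)-i(\Phi)$, and the same $L,\nu_0,\nu_1$.
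The oblique boundary condition involves only $Du$, $u$ and the data. It is therefore untouched, because Definition \ref{dingyi}(ii) does not involve $\Phi$.
Thus $u$ solves \eqref{8866} with $\vartheta=-i(\Phi)\in(0,1)$.

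\emph{Transferring the constants.} The estimates must also come out in the form stated in Theorem \ref{thm1}.
The Hamiltonian term of \eqref{8866} satisfies $|Du|^{\vartheta}|\mathcal H|\le \mathcal K|Du|^{\vartheta}+\mathcal M|Du|^{m+\vartheta}$. Since $\vartheta-i(\hat\Phi)=-i(\Phi)$, the relevant exponent relations are preserved: $1+i(\Phi)-m=1+i(\hat\Phi)+\vartheta-(m+\vartheta)$.
The scaling normalizations for the modified problem will therefore be designed with the weights $(\|f\|_\infty+\mathcal K)^{1/(1+i(\Phi))}$ and $\mathcal M^{1/(1+i(\Phi)-m)}$. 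Equivalently, the estimate for \eqref{8866} is to be expressed in terms of $1+i(\hat\Phi)-\vartheta=1+i(\Phi)$.
With this, the constants depend only on the structure parameters listed in Theorem \ref{thm1}, and the two cases (i), (ii) on $m$ are inherited verbatim.

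\emph{Main obstacle.} The reduction itself is bookkeeping; the main obstacle is the last step. I must make sure the later proof for \eqref{8866} produces estimates with the same dependence and exponents as those claimed in Theorem \ref{thm1}, so the proposition is only meaningful once that estimate is formulated in this form. To guarantee this, I would state the target estimate for \eqref{8866} explicitly with the weight $\frac{1}{1+i(\hat\Phi)-\vartheta}$ on $\|f\|_\infty+\mathcal K$, and with $\frac{1}{1+i(\hat\Phi)-\vartheta-m}$ on $\mathcal M$ in case (i). Since these weights equal $\frac{1}{1+i(\Phi)}$ and $\frac{1}{1+i(\Phi)-m}$ by \eqref{upperbarrier}, the estimate matches Theorem \ref{thm1} exactly.
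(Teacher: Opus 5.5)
Your reduction is correct and follows the route the paper takes implicitly (the paper gives no separate proof). Via \eqref{upperbarrier} the ranges \eqref{holderzhibiao} and \eqref{exponent} coincide, Lemma \ref{lem1} turns a singular solution into a solution of \eqref{8866} with the oblique condition untouched, and the weights $1/(1+i(\hat{\Phi})-\vartheta)=1/(1+i(\Phi))$ and $1/(1+i(\hat{\Phi})-\vartheta-m)=1/(1+i(\Phi)-m)$ are exactly those the paper later uses when choosing $K$. One small slip: your displayed identity should read $1+i(\Phi)-m=1+i(\hat{\Phi})-(m+\vartheta)$ (as written, with the extra $+\vartheta$, it reduces to $1-m$ in the singular case), though your final paragraph uses the correct relation.
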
	
\section{$C^{1,\alpha}$ regularity up to the boundary}\label{sec3333}
This section is dedicated to prove our main result concerning the global $C^{1,\alpha}$ regularity.

\subsection{H\"{o}lder regularity up to the boundary}\label{2asection4}
In this subsection, with the aid of Lemma \ref{lem2.1}, we establish the following boundary H\"older regularity. 
\begin{lemma}\label{holder}
	Let $\Omega$ be a bounded $C^{1}$ domain and $0\in \partial \Omega$. Assume \eqref{A1}--\eqref{A3}, $f\in C(\Omega)\cap L^{\infty}(\Omega)$, $g\in C(\partial\Omega)$ and $\beta\in C(\partial\Omega;\mathbb{R}^{n})$. Let $u$ be a viscosity solution to 
	\begin{equation}\label{0001}
		\left\{\begin{aligned}
			\hat{\Phi}\left(\left|Du-q\right|,x\right)F(D^{2}u,x)+|Du-q|^{\vartheta}\mathcal{H}(Du-q,x) &=f|Du-q|^{\vartheta} &&\text{in } \;\;\Omega_{1}, \\
			\beta\cdot Du&=g &&\text{on }\;\; \partial\Omega_{1},
		\end{aligned}\right.
	\end{equation}
with $\|u\|_{L^{\infty}(\Omega_{1})}\leq 1$ and $\|g\|_{L^{\infty}(T_{1})}\leq 1$. Assume that 
\begin{equation}\label{1719}
\max\biggl\{(\mathcal{K}+\|f\|_{L^{\infty}(\Omega_{1})})\left(1+|q|^{(\vartheta-i(\hat{\Phi}))_{+}}\right),\mathcal{M}\left(1+|q|^{(m-i({\Phi}))_{+}}\right)\biggr\}\leq \mathcal{A}_{0}
\end{equation}
for a universal constant $\mathcal{A}_{0}\leq \frac{2\Lambda \nu_{0}}{L}$. Then there exists $\iota=\iota(\mu_{0})\in(0, \mu_{0}/2]$ such that, if $[\psi]_{C^{1}}\leq \iota$, then $u\in C^{{\alpha}^{\prime}}(\overline{\Omega_{1/2}})$ for some ${\alpha}^{\prime}={\alpha}^{\prime}(d, \lambda, \Lambda,\mu_{0})\in(0,1)$ with the estimate
	\begin{align*}
		\norm{u}_{C^{{\alpha}^{\prime}}(\overline{\Omega_{1/2}})}\leq C,
	\end{align*}
	where $C$ is a universal positive constant independent of $q$.
\end{lemma}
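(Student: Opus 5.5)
The plan is to reduce the claim to Lemma \ref{lem2.1}. Concretely, we will show that $u$ satisfies the two Pucci-type inequalities
\[
\mathcal{L}^{+}_{\lambda,\Lambda}(D^{2}u,Du)\geq -C_{0},\qquad \mathcal{L}^{-}_{\lambda,\Lambda}(D^{2}u,Du)\leq C_{0}
\]
in the viscosity sense on the set $\{|Du-q|>1\}\cap\Omega_{1}$, with $C_{0}$ universal and independent of $q$. Once this holds, the boundary condition $\beta\cdot Du=g$ with $\|g\|_{L^\infty}\leq 1$ and the bound $\|u\|_{L^\infty}\leq 1$ are exactly the remaining hypotheses of Lemma \ref{lem2.1} with $\theta=1$. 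That lemma then gives the $C^{\alpha'}$ estimate with a constant independent of $q$, for the same $\iota(\mu_0)$.

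To verify the interior inequalities, take a test function $\varphi$ touching $u$ from below at $x_{0}\in\Omega_1$ with $|D\varphi(x_{0})-q|>1$, and set $p:=D\varphi(x_0)-q$, so $|p|>1$. The supersolution inequality reads
\[
\hat\Phi(|p|,x_0)F(D^2\varphi(x_0),x_0)\geq f(x_0)|p|^{\vartheta}-|p|^{\vartheta}\mathcal{H}(p,x_0).
\]
By (A2)(i) with $i(\hat\Phi)\ge 0$ and $|p|>1$, we get $\hat\Phi(|p|,x_0)\geq L^{-1}\nu_0|p|^{i(\hat\Phi)}>0$. We would like to divide by this, but the sign of the right-hand side matters, so we split into cases. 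If $F(D^2\varphi(x_0),x_0)\geq 0$, then $\mathcal{P}^+(D^2\varphi)\geq F(D^2\varphi,x_0)\geq 0$ by (A1) and $F(0,\cdot)=0$, and we are done. Otherwise, we divide and obtain
\[
\mathcal{P}^+(D^2\varphi(x_0))\geq F(D^2\varphi(x_0),x_0)\geq -\frac{L}{\nu_0}\,|p|^{\vartheta-i(\hat\Phi)}\bigl(\|f\|_\infty+\mathcal{K}+\mathcal{M}|p|^{m}\bigr).
\]

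The main obstacle is bounding the powers of $|p|$ uniformly in $q$, since $|p|$ can be large and grows like $|D\varphi|+|q|$. Recall $\vartheta-i(\hat\Phi)=-i(\Phi)$. Consider first the term $(\|f\|_\infty+\mathcal K)|p|^{-i(\Phi)}$. When $i(\Phi)\geq0$ this is at most $\|f\|_\infty+\mathcal K\leq\mathcal A_0$ because $|p|>1$. When $i(\Phi)<0$ the exponent $-i(\Phi)\in(0,1)$, so we use
\[
|p|^{-i(\Phi)}\leq |D\varphi|^{-i(\Phi)}+|q|^{-i(\Phi)}\leq 1+|D\varphi|+|q|^{-i(\Phi)}.
\]
Condition \eqref{1719} absorbs the $q$ part, and the $|D\varphi|$ part is absorbed by the first-order term $\Lambda|Du|$ in $\mathcal{L}^{\pm}$, since the coefficient is at most $L\mathcal A_0/\nu_0\leq 2\Lambda$. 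This is where $\mathcal{A}_{0}\leq 2\Lambda\nu_0/L$ is used, possibly after shrinking $\mathcal{A}_{0}$ or slightly rescaling. The Hamiltonian term is handled the same way. Its exponent is $m-i(\Phi)\le 1+i(\hat\Phi)\cdot 0+\dots$; precisely, $m\le 1+i(\Phi)$ gives $m-i(\Phi)\leq1$. So $|p|^{m-i(\Phi)}\leq |p|$ if the exponent is nonnegative and $\le1$ otherwise. Then $|p|\leq|D\varphi|+|q|$, and the $|q|$ contribution, more carefully $|q|^{(m-i(\Phi))_+}$ via the subadditivity of $t\mapsto t^{s}$ for $s\le1$, is controlled by \eqref{1719}. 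The $|D\varphi|$ contribution again goes into $\Lambda|D\varphi|$.

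Collecting these bounds yields $\mathcal{L}^+_{\lambda,\Lambda}(D^2\varphi,D\varphi)(x_0)\geq -C_0$, and the subsolution case is symmetric, giving $\mathcal{L}^-\leq C_0$. Points where $u$ is locally constant are harmless, since there $D u=0$ is excluded or the inequalities hold trivially. Lemma \ref{lem2.1} then applies and concludes the proof.
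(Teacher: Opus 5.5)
Your proposal is correct and is essentially the paper's proof: you verify $\mathcal{L}^{+}_{\lambda,\Lambda}\ge -C_0$ and $\mathcal{L}^{-}_{\lambda,\Lambda}\le C_0$ on $\{|Du-q|>1\}\cap\Omega_1$ using four ingredients, namely the lower bound $\hat\Phi(t,x)\ge \nu_0 t^{i(\hat\Phi)}/L$ for $t>1$, the identity $\vartheta-i(\hat\Phi)=-i(\Phi)$, subadditivity of $t\mapsto t^{\kappa}$ for $\kappa\le 1$ to split off the $|q|$-powers controlled by \eqref{1719}, and absorption of the leftover $|D\varphi(x_0)|$ terms into $\Lambda|D\varphi(x_0)|$, and then you apply Lemma \ref{lem2.1} with $\theta=1$ (two cosmetic points: for the $\mathcal{L}^{+}$ inequality the test function should touch from above, i.e.\ $u-\varphi$ has a local maximum and you use the subsolution property, as the paper does; and no case split on the sign of $F$ is needed, since dividing by $\hat\Phi>0$ preserves the inequality). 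Your hedge about shrinking $\mathcal{A}_0$ (or enlarging $\Lambda$ to $5\Lambda$ when invoking Lemma \ref{lem2.1}) is genuinely needed: absorption requires $\frac{L}{\nu_0}\bigl(\|f\|_{L^\infty(\Omega_1)}+\mathcal{K}+\mathcal{M}\bigr)\le\Lambda$, whereas $\mathcal{A}_0\le 2\Lambda\nu_0/L$ only bounds this quantity by $4\Lambda$, and the paper's proof passes over this same point.
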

\begin{proof} In order to apply Lemma \ref{lem2.1}, it is enough to show that $u$ satisfies
	\begin{equation}\label{178}
		\mathcal{L}^{+}_{\lambda, \Lambda}(D^{2}u,Du)\geq -C_{0}\quad  \text{in }\;\; \left\{|Du-q|>1\right\}\cap\Omega_{1}
	\end{equation}
and
	\begin{equation}\label{179}
	\mathcal{L}^{-}_{\lambda, \Lambda}(D^{2}u,Du)\leq C_{0}\quad  \text{in }\;\; \left\{|Du-q|>1\right\}\cap\Omega_{1}
\end{equation}
in the viscosity sense for some positive constant $C_{0}$ to be determined later. We first prove \eqref{178}.
Let $\varphi\in C^{2}$ be a test function such that $u-\varphi$ attains a local maximum at $x_{0}\in \Omega_{1}$ and $|D\varphi(x_{0})-{q}|>1$. By the definition of viscosity subsolution, we have
\begin{equation*}
	\hat{\Phi}\left(\left|D\varphi(x_{0})-{q}\right|,x_{0}\right)F(D^{2}\varphi(x_{0}),x_{0})+|D\varphi(x_{0})-{q}|^{\vartheta}\mathcal{H}(D\varphi(x_{0})-{q},x_{0}) \geq f(x_0)|D\varphi(x_{0})-{q}|^{\vartheta}.
\end{equation*}
Then it follows from \eqref{A1} and $F(0,x_{0})=0$ that
\begin{equation}\label{221}
	\begin{split}
		\mathcal{L}^{+}_{\lambda, \Lambda}(D^{2}\varphi(x_{0}),D\varphi(x_{0}))&= 	\mathcal{P}^{+}_{\lambda, \Lambda}(D^{2}\varphi(x_{0}))+\Lambda|D\varphi(x_{0})|\\
		&\geq F(D^{2}\varphi(x_{0}),x_{0})-F(0,x_{0})+\Lambda|D\varphi(x_{0})|\\
		&\geq \frac{|D\varphi(x_{0})-{q}|^{\vartheta}\left(f(x_{0})-\mathcal{H}(D\varphi(x_{0})-{q},x_{0}) \right)}{\hat{\Phi}\left(\left|D\varphi(x_{0})-{q}\right|,x_{0}\right)}+\Lambda|D\varphi(x_{0})|\\
		&=:\mathcal{J}_{1}+\Lambda|D\varphi(x_{0})|.
	\end{split}
\end{equation}
Since $|D\varphi(x_{0})-{q}|>1$, applying \eqref{A2} and \eqref{A3} to deduce
\begin{equation}\label{222}
	\begin{split}
		|\mathcal{J}_{1}|&\leq \frac{L}{\nu_{0}}\left(\mathcal{K}+\|f\|_{L^{\infty}(\Omega_{1})}\right)|D\varphi(x_{0})-{q}|^{\vartheta-i(\hat{\Phi})}+\frac{L}{\nu_{0}}\mathcal{M}|D\varphi(x_{0})-{q}|^{m+\vartheta-i(\hat{\Phi})}\\
		&=\frac{L}{\nu_{0}}\left(\mathcal{K}+\|f\|_{L^{\infty}(\Omega_{1})}\right)|D\varphi(x_{0})-{q}|^{-i({\Phi})}+\frac{L}{\nu_{0}}\mathcal{M}|D\varphi(x_{0})-{q}|^{m-i({\Phi})}\\
		&=:\mathcal{J}_{11}+\mathcal{J}_{12},
	\end{split}
\end{equation}
where the fact $\vartheta-i(\hat{\Phi})=-i(\Phi)$ is used.\\
{\bf Estimate of $\mathcal{J}_{11}$.} If $i(\Phi)\geq 0$, then applying $|D\varphi(x_{0})-{q}|>1$ and \eqref{1719} to obtain
\begin{equation*}
	\begin{split}
		\mathcal{J}_{11}\leq \frac{L}{\nu_{0}}\left(\mathcal{K}+\|f\|_{L^{\infty}(\Omega_{1})}\right)\leq \frac{L}{\nu_{0}}\mathcal{A}_{0}.
	\end{split}
\end{equation*}
If $-1<i(\Phi)< 0$, for the case $|D\varphi(x_{0})|<1$, then it follows from the basic inequality 
\begin{equation}\label{basicinequality}
(a_{1}+a_{2})^{\kappa}\leq a_{1}^{\kappa}+a_{2}^{\kappa}\quad{\rm for \;any\;} a_{1},a_{2}>0,0<\kappa\leq 1
\end{equation}
and \eqref{1719} that
\begin{equation*}
	\begin{split}
		\mathcal{J}_{11}&\leq \frac{L}{\nu_{0}}\left(\mathcal{K}+\|f\|_{L^{\infty}(\Omega_{1})}\right)\left(|D\varphi(x_{0})|^{-i(\Phi)}+|q|^{-i(\Phi)}\right)\\
		&\leq \frac{L}{\nu_{0}}\left(\mathcal{K}+\|f\|_{L^{\infty}(\Omega_{1})}\right)\left(1+|q|^{-i(\Phi)}\right)\\
		&\leq \frac{L}{\nu_{0}}\mathcal{A}_{0}.
	\end{split}
\end{equation*}
For the case $|D\varphi(x_{0})|\geq 1$, then it follows from \eqref{1719} that
\begin{equation*}
	\begin{split}
		\mathcal{J}_{11}&\leq \frac{L}{\nu_{0}}\mathcal{A}_{0}+\frac{L}{\nu_{0}}\left(\mathcal{K}+\|f\|_{L^{\infty}(\Omega_{1})}\right)|D\varphi(x_{0})|^{-i(\Phi)}\\
		&\leq 	\frac{L}{\nu_{0}}\mathcal{A}_{0}+\frac{L}{\nu_{0}}\left(\mathcal{K}+\|f\|_{L^{\infty}(\Omega_{1})}\right)|D\varphi(x_{0})|.
	\end{split}
\end{equation*}
In summary, we deduce
\begin{equation*}
	\begin{split}
		\mathcal{J}_{11}&\leq 
		\begin{cases}
			\frac{L}{\nu_{0}}\mathcal{A}_{0}+\frac{L}{\nu_{0}}\left(\mathcal{K}+\|f\|_{L^{\infty}(\Omega_{1})}\right)|D\varphi(x_{0})| &\text{for}\;\;|D\varphi(x_{0})|\geq 1,\\
			\frac{L}{\nu_{0}}\mathcal{A}_{0} &\text{for}\;\;|D\varphi(x_{0})|<1.
		\end{cases}
	\end{split}
\end{equation*}
{\bf Estimate of $\mathcal{J}_{12}$.} If $0<m\leq i(\Phi)$, then it follows from $|D\varphi(x_{0})-{q}|>1$ and \eqref{1719} that 
\begin{equation*}
	\begin{split}
		\mathcal{J}_{12}\leq \frac{L}{\nu_{0}}\mathcal{M}.
	\end{split}
\end{equation*}
If $i(\Phi)<m\leq 1+i(\Phi)$, for the case $|D\varphi(x_{0})|<1$, a combination of \eqref{basicinequality} and \eqref{1719} yields 
\begin{equation*}
	\begin{split}
		\mathcal{J}_{12}\leq \frac{L}{\nu_{0}}\mathcal{M}\left(|D\varphi(x_{0})|^{m-i(\Phi)}+|q|^{m-i(\Phi)}\right)
		\leq \frac{L}{\nu_{0}}\mathcal{M}\left(1+|q|^{m-i(\Phi)}\right)\leq \frac{L}{\nu_{0}}\mathcal{A}_{0}.
	\end{split}
\end{equation*}
For the case $|D\varphi(x_{0})|\geq 1$, it is easy to deduce that
\begin{equation*}
	\begin{split}
		\mathcal{J}_{12}\leq \frac{L}{\nu_{0}}\mathcal{A}_{0}+\frac{L}{\nu_{0}}\mathcal{M}|D\varphi(x_{0})|^{m-i(\Phi)}\leq \frac{L}{\nu_{0}}\mathcal{A}_{0}+\frac{L}{\nu_{0}}\mathcal{M}|D\varphi(x_{0})|.
	\end{split}
\end{equation*}
In summary, we have
\begin{equation*}
	\begin{split}
		\mathcal{J}_{12}&\leq 
		\begin{cases}
			\frac{L}{\nu_{0}}\mathcal{A}_{0}+\frac{L}{\nu_{0}}\mathcal{M}|D\varphi(x_{0})| &\text{for}\;\;|D\varphi(x_{0})|\geq 1,\\
			\frac{L}{\nu_{0}}\mathcal{A}_{0} &\text{for}\;\;|D\varphi(x_{0})|<1.
		\end{cases}
	\end{split}
\end{equation*}

Consequently, for the case $|D\varphi(x_{0})|<1$, substituting the estimates of $J_{11}$ and $J_{12}$ into \eqref{221}, we derive
\begin{equation*}
	\begin{split}
		\mathcal{L}^{+}_{\lambda, \Lambda}(D^{2}\varphi(x_{0}),D\varphi(x_{0}))&\geq -\mathcal{J}_{11}-\mathcal{J}_{12}+\Lambda|D\varphi(x_{0})|\\
		&\geq -\frac{2L}{\nu_{0}}\mathcal{A}_{0}.
	\end{split}
\end{equation*}
On the other hand, for the case $|D\varphi(x_{0})|\geq 1$, we can get
\begin{equation*}
	\begin{split}
		\mathcal{L}^{+}_{\lambda, \Lambda}(D^{2}\varphi(x_{0}),D\varphi(x_{0}))
		&\geq -\frac{2L}{\nu_{0}}\mathcal{A}_{0}+\left(\Lambda-\frac{L}{\nu_{0}}\left(\mathcal{K}+\|f\|_{L^{\infty}(\Omega_{1})}+\mathcal{M}\right)\right)|D\varphi(x_{0})| \\
		&\geq -\frac{2L}{\nu_{0}}\mathcal{A}_{0}.
	\end{split}
\end{equation*}

Combining both cases, we take $C_{0}=\max\left\{4\Lambda,1\right\}$ and conclude that
\begin{align*}
	\mathcal{L}_{\lambda, \Lambda}^{+}(D^{2}u, Du) \geq -C_{0} \quad \text{in } \{|Du-q|>1\}\cap\Omega_{1}.
\end{align*}
By an analogous argument, we can also show that \eqref{179} holds.
Therefore, the desired conclusion immediately follows from Lemma \ref{lem2.1}. 
\end{proof}
\subsection{Approximation lemma}
This subsection is devoted to proving a key approximation lemma, which plays a paramount role in our forthcoming geometric argument. 
\begin{lemma}\label{lem01}
	 Assume \eqref{A1}-\eqref{A5} hold and $u$ is a viscosity solution to \eqref{0001} with $\|u\|_{L^{\infty}(\Omega_{1})}\leq 1$. Given $\varepsilon>0$, there exists $\sigma=\sigma(d, \lambda, \Lambda, i(\Phi), L, \nu_{0}, \mu_{0},\varepsilon)>0$ such that if
	\begin{equation*}
		\begin{split}
			\max\Bigg\{
			&\left(\fint_{\Omega_{1}}\Abs{{\rm osc}_{{F}}(x)}^{d}\text{d}x\right)^{1/d}, (1+|q|^{(\vartheta-i(\hat{\Phi}))_+})\left(\norm{f}_{L^{\infty}(\Omega_{1})}+\mathcal{K}\right),\\ &\mathcal{M}\left(1+|q|^{(m-i({\Phi}))_{+}}\right), \norm{g}_{L^{\infty}(T_{1})}, [\beta-\beta_{0}]_{C^{\alpha^{\prime}}(T_{1})}, \norm{\psi}_{C^{1}(T_{1})}
			\Bigg\} \leq \sigma,
		\end{split}
	\end{equation*}
	then there exists a function $v\in C^{1, \alpha_{0}}\left(\overline{B_{3/4}^{+}}\right)$  satisfying
	\begin{equation} \label{111tiaohe}
		\left\{
		\begin{array}{rcrcl}
			{F}(D^2 v,0)= 0 & \text{in} & B_{3/4}^{+}, \\
			\beta_{0}\cdot Dv= 0 & \text{on} & T_{3/4},
		\end{array}
		\right.
	\end{equation}
	where $\beta_{0}:=\beta(0)$, such that
	\begin{align*}
		\norm{u-v}_{L^{\infty}\left(\Omega_{1/2}\right)}\leq\varepsilon.
	 \end{align*}
\end{lemma}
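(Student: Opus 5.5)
We argue by contradiction together with compactness. Suppose the statement fails for some $\varepsilon_0>0$. Then there are sequences $F_k$, $\hat\Phi_k$, $\mathcal H_k$, $f_k$, $g_k$, $\beta_k$, $\psi_k$, $q_k$, $\mathcal K_k$, $\mathcal M_k$ satisfying \eqref{A1}--\eqref{A5} with uniform structural constants. There are also viscosity solutions $u_k$ of \eqref{0001} with $\|u_k\|_{L^\infty(\Omega^k_1)}\le 1$, for which every quantity in the smallness condition is at most $1/k$. Nevertheless,
\[
\|u_k-v\|_{L^\infty(\Omega^k_{1/2})}>\varepsilon_0
\]
for every solution $v$ of \eqref{111tiaohe} with $\beta_0=\beta_k(0)$.

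\textbf{Step 1: uniform H\"older bounds.} For $k$ large we have $\|\psi_k\|_{C^1}\le\iota$ and $\|g_k\|_{L^\infty}\le 1$. Moreover \eqref{1719} holds with any prescribed $\mathcal A_0$, since the relevant quantities are at most $1/k$. Lemma \ref{holder} then gives
\[
\|u_k\|_{C^{\alpha'}(\overline{\Omega^k_{1/2}})}\le C,
\]
and the same bound holds on slightly larger concentric regions after scaling. Since $\psi_k\to0$ in $C^1$, the domains $\Omega^k_1$ converge to $B_1^+$ locally. Extend each $u_k$ continuously, with the same H\"older modulus, to a neighbourhood. By Arzel\`a--Ascoli a subsequence converges locally uniformly to some $u_\infty\in C^{\alpha'}(\overline{B^+_{3/4}})$ with $|u_\infty|\le1$. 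We may also assume $\beta_k(0)\to\beta_\infty$ with $\beta_\infty\cdot e_d\ge\mu_0$, $|\beta_\infty|\le 1$. Since $[\beta_k-\beta_k(0)]_{C^{\alpha'}}\to0$, we get $\beta_k\to\beta_\infty$ uniformly. Finally, $F_k(\cdot,0)\to F_\infty$ locally uniformly on $S^d$ by uniform ellipticity and Lipschitz bounds.

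\textbf{Step 2: the limit equation, which is the main obstacle.} We must show that $u_\infty$ solves $F_\infty(D^2u_\infty)=0$ in $B^+_{3/4}$ and $\beta_\infty\cdot Du_\infty=0$ on $T_{3/4}$. This is hard because the equation is degenerate near $Du=q_k$ and $q_k$ may be unbounded, so one cannot simply divide by $\hat\Phi_k$.

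\emph{Interior.} I would follow the standard Imbert--Silvestre device. Take a quadratic polynomial $P$ touching $u_\infty$ from above at $x_0$, strictly, and suppose $F_\infty(D^2P)<0$ for contradiction. Perturbed polynomials $P+c_k$ touch $u_k$ at points $x_k\to x_0$.

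Case (a): $|DP(x_k)-q_k|$ stays bounded away from $0$. Then divide by $\hat\Phi_k(|DP-q_k|,x_k)$, which is bounded below in the appropriate sense by \eqref{A2}. The lower-order terms are controlled by the smallness hypotheses on $f_k$, $\mathcal K_k$ and $\mathcal M_k(1+|q_k|^{(m-i(\Phi))_+})$, essentially as in the estimates for $\mathcal J_{11}$ and $\mathcal J_{12}$ in Lemma \ref{holder}. Since $|DP-q_k|\le|DP|+|q_k|$, these terms tend to zero. The $x$-dependence of $F_k$ is handled by the $L^d$-smallness of ${\rm osc}_{F_k}$ via an ABP/$W^{2,d}$ style argument, or by first passing through the viscosity limit with $F_k(M,x)$ close to $F_k(M,0)$ in the $L^d$ sense, as in Caffarelli's approximation.

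Case (b): $DP(x_k)-q_k\to0$ along a subsequence. Then $q_k$ is bounded, and we use the trick of testing with
\[
P(x)+\langle e,x-x_0\rangle\,\delta
\]
for small $\delta$ and a direction $e$ chosen so the gradient stays away from $q_k$. If $D^2P$ has a positive eigenvalue, one can instead use the "flat" test $P$ restricted to directions where $u_\infty$ is constant, as in the usual argument from \cite{Imbert1} and \cite{Huo2026}.

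\emph{Boundary.} The condition on $T_{3/4}$ passes to the limit by the stability of oblique viscosity conditions, since $g_k\to0$, $\beta_k\to\beta_\infty$ and $\psi_k\to0$ in $C^1$. Test functions touching $u_\infty$ at boundary points are moved to touching points of $u_k$ on $\partial\Omega^k$ or inside, and the interior case is excluded by the interior equation with a standard strict-perturbation argument in the $\beta_\infty$ direction.

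\textbf{Step 3: conclusion.} By Remark \ref{optimal exponent}, $u_\infty\in C^{1,\alpha_0}(\overline{B^+_{3/4}})$. Take $v_k$ to be a solution of \eqref{111tiaohe} with data $\beta_k(0)$ and $F_k(\cdot,0)$ that is close to $u_\infty$; this uses stability and uniqueness for the limiting problem, or alternatively take $v=u_\infty$ and absorb the small discrepancy in $\beta$ and $F$ via a rescaling argument. Then $\|u_k-v_k\|_{L^\infty(\Omega^k_{1/2})}\to0$, contradicting the choice of $\varepsilon_0$.
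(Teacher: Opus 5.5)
Your architecture is the same as the paper's. You argue by contradiction, get uniform boundary H\"older bounds from Lemma \ref{holder} plus Arzel\`a--Ascoli, and identify the limit problem. You then compare with $v_k$ solving the mixed problem for $F_k(\cdot,0)$ and $\beta_k(0)$ with $v_k=u_k$ on $\partial B^+_{3/4}\setminus T_{3/4}$, using Li--Zhang existence/uniqueness, stability, and uniqueness of the limit problem. Discard your alternative $v=u_\infty$: $v$ must solve \eqref{111tiaohe} with exactly $F_k(\cdot,0)$ and $\beta_k(0)$, and no rescaling turns $F_\infty$ into $F_k(\cdot,0)$. The paper outsources the limit-equation step to Lemma 4.1 of Huo's earlier paper. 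The one place you add detail beyond it, the genuinely degenerate Case (b), does not work as written.

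\textbf{Why Case (b) fails.} Take $P$ touching from above, with $M=D^2P$ and $F_\infty(M)<0$. A linear tilt $P+\delta\langle e,x-x_0\rangle$ moves the touching point to some $x_\delta$ you do not control. The quantity that must stay nonzero there is $M(x_\delta-x_0)+(DP(x_0)-q_k)+\delta e$. No a priori choice of $e$ keeps it away from $0$, and where it vanishes the degenerate equation says nothing. Your fallback is also backwards: $F_\infty(M)<0=F_\infty(0)$ forces $M$ to have a \emph{negative} eigenvalue, and nothing tells you $u_\infty$ is constant in any direction.

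\textbf{The missing idea} is the Imbert--Silvestre projection perturbation. Let $E$ be the span of the eigenvectors of $M$ with negative eigenvalues, and $P_E$ the orthogonal projection onto $E$. Fix $\eta>0$ small and take $k$ large so that $\abs{DP(x_0)-q_k}<\eta/2$. Test $u_k$ with $\varphi=P-\eta\abs{P_E(x-x_0)}$ at a maximum point $x_k$ of $u_k-\varphi$ near $x_0$.
\begin{itemize}
\item If $P_E(x_k-x_0)\neq0$, then $\varphi$ is smooth at $x_k$ and $D^2\varphi(x_k)\le M$. With $e=P_E(x_k-x_0)/\abs{P_E(x_k-x_0)}$ you have $\langle M(x_k-x_0),e\rangle\le 0$, hence $\langle D\varphi(x_k)-q_k,e\rangle<-\eta/2$.
\item If $P_E(x_k-x_0)=0$, then for any unit $e\in E$ the function $P-\eta\langle e,x-x_0\rangle$ lies above $\varphi$ and agrees with it at $x_k$. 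So it touches $u_k$ from above at $x_k$, with Hessian $M$. Its gradient minus $q_k$ is an element of $E^{\perp}$, plus $-\eta e$, plus a vector of size $<\eta/2$.
\end{itemize}
In both cases the test gradient stays at distance at least $\eta/2$ from $q_k$. You can therefore divide by $\hat\Phi_k\ge c(\eta)>0$ exactly as in your Case (a) and get $F_k(M,x_k)\ge -o(1)$, contradicting $F_\infty(M)<0$.

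\textbf{Two related points.} The same gradient-avoidance issue reappears when you exclude interior touching points in your boundary step. The interior equation is silent where $D\varphi=q_k$, so a ``strict perturbation in the $\beta_\infty$ direction'' must also be built to keep $D\varphi$ away from $q_k$. Also, $F_k(M,x_k)\to F_\infty(M)$ at the single points $x_k$ does not follow from $L^d$-smallness of ${\rm osc}_{F_k}$; the paper simply asserts locally uniform convergence here. A rigorous version needs an $L^d$-viscosity stability argument in the spirit of Caffarelli--Crandall--Kocan--Swiech, or pointwise continuity of $F$ in $x$.
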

\begin{proof}
	The proof is based on a contradiction argument. Suppose the thesis of the lemma fails to hold. Then there exist $\varepsilon_{0}>0$ and sequences $\{F_{j}\}_{j\in \mathbb{N}}$, $\{\hat{\Phi}_{j}\}_{j\in \mathbb{N}}$, $\{f_{j}\}_{j\in \mathbb{N}}$, $\{u_{j}\}_{j\in \mathbb{N}}$, $\{\mathcal{H}_{j}\}_{j\in \mathbb{N}}$, $\{g_{j}\}_{j\in \mathbb{N}}$, $\{q_{j}\}_{j\in \mathbb{N}}$, $\{\beta_{j}\}_{j\in \mathbb{N}}$, $\{\Omega_{j}\}_{j\in \mathbb{N}}$ such that
	\begin{itemize}
		\item   [{\rm$({{\rm i}})$}]  $F_{j}:S^{d}\times \left(\Omega_{j}\right)_{1} \rightarrow \mathbb{R}$ is uniformly $(\lambda,\Lambda)$-elliptic;
		\item [{\rm$({{\rm ii}})$}] $\hat{\Phi}_{j}\in C\left([0,\infty)\times (\Omega_{j})_{1}\right)$ such that the map $t\mapsto \frac{\hat{\Phi}_{j}(t,y)}{t^{i(\hat{\Phi})}}$ is almost non-decreasing and the map $t\mapsto \frac{\hat{\Phi}_{j}(t,y)}{t^{s(\hat{\Phi})}}$ is almost non-increasing  with the same constant $L\geq 1$ in $(0,\infty)$;	
		\item [ {\rm$({{\rm iii}})$}] $\mathcal{H}_{j}:\rn \times (\Omega_{j})_{1}$ is continuous and there exist constants $\mathcal{K}_{j},\mathcal{M}_{j}>0$ such
	that
	\begin{equation*}
		|\mathcal{H}_{j}(t,x)|\leq \mathcal{K}_{j}+\mathcal{M}_{j}|t|^{m} \quad  {\rm for\; every\;}(t,x) \in\rn\times \left(\Omega_{j}\right)_{1};
	\end{equation*}	
	\item [ {\rm$({{\rm iv}})$}] $u_{j}$ is a viscosity solution of 
	\begin{equation*}
		\left\{\begin{aligned}
			\hat{\Phi}_{j}\left(\left|Du_{j}-q_{j}\right|,x\right)F_{j}(D^{2}u_{j},x)+|Du_{j}-q_{j}|^{\vartheta}\mathcal{H}_{j}(Du_{j}-q_{j},x) &=f_{j}|Du_{j}-q_{j}|^{\vartheta} &&\text{in } \;\;(\Omega_{j})_{1}, \\
			\beta_{j}\cdot Du_{j}&=g_{j} &&\text{on }\;\; \partial(\Omega_{j})_{1},
		\end{aligned}\right.
	\end{equation*}
	with $ \|u_{j}\|_{L^{\infty}((\Omega_{j})_{1})}\leq 1$, where $\Omega_{j}$ is a $C^{1}$ domain;
	\item [ {\rm$({{\rm v}})$}] and
	\begin{equation*}
	\begin{split}
		\max\Bigg\{
		&\left(\fint_{(\Omega_{j})_{1}}\Abs{{\rm osc}_{{F_{j}}}(x)}^{d}\text{d}x\right)^{1/d}, (1+|q_{j}|^{(\vartheta-i(\hat{\Phi}))_+})\left(\norm{f_{j}}_{L^{\infty}((\Omega_{j})_{1})}+\mathcal{K}_{j}\right),\\ &\mathcal{M}_{j}\left(1+|q_{j}|^{(m-i({\Phi}))_{+}}\right), \norm{g_{j}}_{L^{\infty}(T_{1})}, [\beta_{j}-\beta_{0}]_{C^{\alpha^{\prime}}(T_{1})}, \norm{\psi_{j}}_{C^{1}(T_{1})}
		\Bigg\} \leq\frac{1}{j}.
	\end{split}
\end{equation*}
\end{itemize}	
However, it holds 
\begin{equation}\label{22daomaodun}
\|u_{j}-v_{j}\|_{L^{\infty}\left((\Omega_{j})_{1/2}\right)}>\varepsilon_{0}
\end{equation}
for any $v_{j}$ satisfying the corresponding conditions.

By the uniform ellipticity assumption on the operator $F_{j}$ and (v), we may assume (after passing to a subsequence if necessary)that $F_{j}(M,x)\rightarrow F_{\infty}(M,0)$ locally uniformly in $S^{d}\times B_{1}^{+}$, where $F_{\infty}$ is a uniformly $(\lambda,\Lambda)$-elliptic operator. Moreover, by the boundary H\"older estimates in Lemma \ref{holder} and the Arzel\`a--Ascoli theorem, there exists a function $u_{\infty}\in C(\overline{B_{1}^{+}})$ such that $u_{j}$ converges to $u_{\infty}$ locally uniformly up to a subsequence.
Additionally, $\beta_{j}\to \beta_\infty$ for some constant vector $\beta_\infty \in \mathbb{R}^d$ and $(\Omega_{j})_{1}\to B_{1}^{+}$.
Now, by arguing as \cite[Lemma 4.1]{Huo2026}, we conclude that $u_{\infty}$ is a viscosity solution to
\begin{equation*}
	\left\{\begin{aligned}
		F_{\infty}(D^{2}u_{\infty},0) &=0 &&\text{in } B_{1}^{+}, \\
		\beta_\infty\cdot Du_{\infty}&=0 &&\text{on } T_{1}.
	\end{aligned}\right.
\end{equation*}
To proceed, let $v_{j}$ is a viscosity solution of
$$
\left\{
\begin{array}{rclcl}
	F_{k}(D^2 v_{j},0) &=& 0 & \mbox{in} & {B}^+_{\frac{3}{4}}, \\
	\beta_{j}(0) \cdot D v_{j} &=& 0 & \mbox{on} & {T}_{\frac{3}{4}},\\
	v_j &=& u_j & \mbox{on} & \partial {B}^{+}_{\frac{3}{4}}\setminus {T}_{\frac{3}{4}}.
\end{array}
\right.
$$
Existence and uniqueness follow from \cite[Theorem 3.3]{LiZhang}. By stability, $v_j\to v_\infty$ locally uniformly, where $v_\infty$ satisfies
$$
\left\{
\begin{array}{rclcl}
	F_{\infty}(D^2 v_{\infty},0) &=& 0 & \mbox{in} & {B}^+_{\frac{3}{4}}, \\
	\beta_0 \cdot D v_{\infty} &=& 0 &\mbox{on}& {T}_{\frac{3}{4}} \\
	v_{\infty} &=& u_{\infty} & \mbox{on} & \partial {B}^{+}_{\frac{3}{4}}\setminus {T}_{\frac{3}{4}}.
\end{array}
\right.
$$
Since the solution of the above equation is unique, we get $u_{\infty}=v_{\infty}$, which contradicts with \eqref{22daomaodun} for $j$ sufficiently large. This completes the proof of the desired result.
\end{proof}

\begin{lemma}\label{lem11}
 Assume \eqref{A1}-\eqref{A5} hold. Let $u$ be a viscosity solution to \eqref{0001} with $\|u\|_{L^{\infty}(\Omega_{1})}\leq 1$. For every ${\alpha}\in(0, \alpha_{0})$, there exists constant $\sigma>0$ such that if 
\begin{equation}\label{080}
	\begin{split}
		\max\Bigg\{
		&\left(\fint_{\Omega_{1}}\Abs{{\rm osc}_{{F}}(x)}^{d}{\rm d}x\right)^{1/d}, (1+|q|^{(\vartheta-i(\hat{\Phi}))_+})\left(\norm{f}_{L^{\infty}(\Omega_{1})}+\mathcal{K}\right),\\ &\mathcal{M}\left(1+|q|^{(m-i({\Phi}))_{+}}\right), \norm{g}_{L^{\infty}(T_{1})}, [\beta-\beta_{0}]_{C^{\alpha^{\prime}}(T_{1})}, \norm{\psi}_{C^{1}(T_{1})}
		\Bigg\} \leq \sigma,
	\end{split}
\end{equation}
then there exist a universal constant $0<\rho<\frac{1}{2}$ and an affine function $\ell_{1}(x)=u(0)+\mathfrak{b}_{1}\cdot x$ such that
\begin{align}\label{011}
	\norm{u-\ell_{1}}_{L^{\infty}(\Omega_{\rho})}\leq \rho^{1+{\alpha}}, \quad  \beta_{0}\cdot \mathfrak{b}_{1}=0, \quad  |\mathfrak{b}_{1}|\leq {C_{e}},    
\end{align}
where constant ${C_e}=C_{e}(d,\lambda,\Lambda,\mu_{0})>0$.
\end{lemma}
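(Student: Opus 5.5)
The plan is to combine the approximation Lemma \ref{lem01} with the $C^{1,\alpha_0}$ regularity of the limiting profile from Remark \ref{optimal exponent}, and then choose $\rho$ first and $\varepsilon$ second. Fix $\alpha\in(0,\alpha_0)$ and let $\varepsilon>0$ be chosen later. Lemma \ref{lem01} gives $\sigma=\sigma(\varepsilon)$ and a solution $v$ of \eqref{111tiaohe} with $\|u-v\|_{L^\infty(\Omega_{1/2})}\le\varepsilon$. Since $\|u\|_{L^\infty}\le1$, we have $\|v\|_{L^\infty(B_{3/4}^+)}\le 1+\varepsilon\le 2$. We may assume this bound on $\|v\|_{L^\infty(B^+_{3/4})}$: in the construction of Lemma \ref{lem01}, $v$ takes boundary data from $u$, and the maximum principle for the oblique problem applies.

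By Remark \ref{optimal exponent}, after rescaling from $B_1^+$ to $B_{3/4}^+$,
\[
\|v\|_{C^{1,\alpha_0}(\overline{B^+_{1/2}})}\le C_e\|v\|_{L^\infty}\le 2C_e .
\]
Set $\mathfrak b_1:=Dv(0)$. The boundary condition on $T_{3/4}$ holds classically, since $v\in C^{1}$ up to $T$. Hence $\beta_0\cdot\mathfrak b_1=0$ and $|\mathfrak b_1|\le 2C_e$, and the factor $2$ is absorbed into the constant.

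The Taylor estimate then gives $|v(x)-v(0)-\mathfrak b_1\cdot x|\le 2C_e|x|^{1+\alpha_0}$ for $x\in B^+_{1/2}$. Points of $\Omega_\rho$ lie in the region $x_d>\psi(x')\ge-\|\psi\|_{C^1}|x'|$, so they may fall slightly below $\{x_d>0\}$. To handle this I would either extend $v$ across $T$ in a $C^{1,\alpha_0}$ fashion (e.g.\ by the Whitney/reflection extension, with comparable norm), or compare $u$ and $v$ only on $\Omega_\rho\cap B^+_\rho$ and control the thin strip of width $\le\sigma\rho$ using the uniform H\"older bound of Lemma \ref{lem2.1}/Lemma \ref{holder}. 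Either way, with $\ell_1(x):=u(0)+\mathfrak b_1\cdot x$ we obtain
\[
\|u-\ell_1\|_{L^\infty(\Omega_\rho)}\le \|u-v\|+|u(0)-v(0)|+\sup_{\Omega_\rho}|v-v(0)-\mathfrak b_1\cdot x| \le 2\varepsilon+ 2C_e\rho^{1+\alpha_0}+o_\sigma(1).
\]

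Now choose $\rho\in(0,1/2)$ so small that $2C_e\rho^{1+\alpha_0}\le\frac12\rho^{1+\alpha}$; this is possible because $\alpha<\alpha_0$, and $\rho$ depends only on universal data and $\alpha$. Then choose $\varepsilon=\frac18\rho^{1+\alpha}$, which fixes $\sigma$, shrinking $\sigma$ further if needed to make the strip error at most $\frac14\rho^{1+\alpha}$. This yields \eqref{011}.

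The main obstacle is the mismatch of domains: $v$ lives on $B^+_{3/4}$ while $u$ lives on $\Omega_1$. I would handle it first via the $C^{1,\alpha_0}$ extension of $v$ below $T$. This makes $\sup_{\Omega_\rho}|v-v(0)-\mathfrak b_1\cdot x|\le C\rho^{1+\alpha_0}$ directly. The approximation $\|u-v\|\le\varepsilon$ on the part of $\Omega_{1/2}$ below $T$ is then understood in the same sense as in Lemma \ref{lem01}, where $v$ is extended and the convergence $(\Omega_j)_1\to B_1^+$ is used.
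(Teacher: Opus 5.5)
Your proposal is correct and follows the paper's proof: you obtain $v$ from Lemma \ref{lem01}, set $\mathfrak{b}_1=Dv(0)$ using the $C^{1,\alpha_0}$ estimate for the constant-coefficient oblique problem, and conclude by the triangle inequality after fixing $\rho$ first and $\varepsilon$ second. You are also more careful than the paper on three points it passes over: the $L^\infty$ bound on $v$ that makes $|Dv(0)|$ universally bounded, the part of $\Omega_\rho$ lying below $T$, and the choice $\rho^{\alpha_0-\alpha}\le 1/(4C_e)$ (the paper's $\rho<(3C_e)^{-(\alpha_0-\alpha)}$ should have exponent $-1/(\alpha_0-\alpha)$).
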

\begin{proof}
	Let $\varepsilon>0$ be a fixed number to be chosen later and  $\sigma$ be the corresponding constant in Lemma \ref{lem01}. From Lemma \ref{lem01}, there exists a function $v\in C^{1,\alpha_{0}}(B_{3/4}^{+})$ satisfying \eqref{111tiaohe} such that  
	\begin{equation}\label{2bijinxiaoliang}
		\|u-v\|_{L^{\infty}(\Omega_{1/2})}\leq \varepsilon.
	\end{equation} 
	By virtue of the $C^{1,\alpha_{0}}$ estimate of $v$ at $0$, there exists a universal positive constant $C_{e}$ such that
	\begin{equation}\label{zhengzexing}
	\Abs{v(x)-l(x)}\leq C_{e}|x|^{1+\alpha_{0}},
	\end{equation}
	\begin{equation}\label{180}
		\beta_{0}\cdot Dv(0)=0\quad |Dv(0)|\leq C_{e}.
	\end{equation}
where $l(x):=v(0)+Dv(0)\cdot x$. Next, let us denote $\mathfrak{b}_{1}:=Dv(0)$ and $\ell(x):=u(0)+\mathfrak{b}_{1}\cdot x$. A combination of the triangle inequality with \eqref{2bijinxiaoliang} and \eqref{zhengzexing} yields that
	\begin{equation}\label{5.1}
		\begin{split}
			\|u-\ell\|_{{L^{\infty}(\Omega_{r})}}&\leq \|u-v\|_{{L^{\infty}(\Omega_{r})}}+\|v-l\|_{{L^{\infty}(\Omega_{r})}}+|u(0)-v(0)|\\
			&\leq 2\varepsilon+C_{e}\rho^{1+\alpha_{0}}\\
			&\leq \rho^{1+\alpha}.
		\end{split}
	\end{equation}
as long as we make the following universal choices
\begin{equation*}
	\rho\in \left(0,\min\left\{\frac{1}{2},\left(\frac{1}{3C_{e}}\right)^{{\alpha}_{0}-\alpha}\right\}\right)\quad {\rm and}\quad \varepsilon\in \left(0,\frac{1}{3}\rho^{1+\alpha}\right),
\end{equation*}	
	Consequently, combining \eqref{180} with \eqref{5.1}, we obtain the desired result \eqref{011}. This completes the proof of the desired result.
\end{proof}

\subsection{Pointwise boundary $C^{1,\alpha}$ regularity}
With the help of Lemma \ref{lem11}, we establish the following pointwise boundary $C^{1,\alpha}$ regularity estimate. 
\begin{proposition}\label{prop3.4}
	 Assume \eqref{A1}--\eqref{A5} hold with $\nu_{0}=\nu_{1}=1$ and $u$ is a viscosity solution to \eqref{0001} with $\|u\|_{L^{\infty}(\Omega_{1})}\leq 1$ and $|q|\leq \frac{C_{e}}{1-\rho^{\alpha}}$. Given $\alpha$ as in \eqref{holderzhibiao},
	there exist a universal constant $\delta>0$ such that if 
\begin{equation}\label{071}
	\begin{split}
		\max\Bigg\{
		&\left(\fint_{\Omega_{1}}\Abs{{\rm osc}_{{F}}(x)}^{d}{\rm d}x\right)^{1/d}, \norm{f}_{L^{\infty}(\Omega_{1})}+\mathcal{K},\\
		&\mathcal{M}, \norm{g}_{C^{\alpha^{\prime}}(T_{1})}, [\beta-\beta_{0}]_{C^{\alpha^{\prime}}(T_{1})}, \norm{\psi}_{C^{1}(T_{1})}
		\Bigg\} \leq \delta,
	\end{split}
\end{equation}
	then $u\in C^{1,\alpha}(0)$ with the estimate
	\begin{equation*}
		\sup\limits_{x\in \Omega_{r}}\Abs{u(x)-\left(u(0)+Du(0)\cdot x\right)}\leq Cr^{1+\alpha}\quad {\rm for \;all\;} r\in (0,\rho],
	\end{equation*}
	where constant $\rho\in(0,\frac{1}{2})$ coming from Lemma \ref{lem11} and $C$ is a universal positive constant.
\end{proposition}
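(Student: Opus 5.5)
The plan is the standard iteration of the one-step flatness improvement, Lemma \ref{lem11}, carried out on dyadic scales $\rho^k$. By induction on $k\ge 0$ I will construct affine functions $\ell_k(x)=u(0)+\mathfrak{b}_k\cdot x$, with $\ell_0=u(0)$ and $\mathfrak{b}_0=0$, such that
\begin{equation*}
\norm{u-\ell_k}_{L^\infty(\Omega_{\rho^k})}\le \rho^{k(1+\alpha)},\qquad \beta_0\cdot\mathfrak{b}_k=0,\qquad |\mathfrak{b}_{k+1}-\mathfrak{b}_k|\le C_e\rho^{k\alpha}.
\end{equation*}
The base case $k=1$ is exactly Lemma \ref{lem11}. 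For the inductive step I rescale:
\begin{equation*}
u_k(x):=\frac{u(\rho^k x)-\ell_k(\rho^k x)}{\rho^{k(1+\alpha)}},\qquad x\in \Omega^{(k)}_1:=\rho^{-k}\Omega\cap B_1 .
\end{equation*}
Then $\norm{u_k}_{L^\infty}\le1$ and $Du_k(x)=\rho^{-k\alpha}\bigl(Du(\rho^kx)-\mathfrak{b}_k\bigr)$. Since $u$ solves \eqref{0001} with the shift $q$, the function $u_k$ solves an equation of the same form with shift $q_k:=\rho^{-k\alpha}(q-\mathfrak{b}_k)$ (when $q=0$ this is $-\rho^{-k\alpha}\mathfrak{b}_k$). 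The new data are as follows: the operator is $F_k(M,x)=\rho^{k(1-\alpha)}F(\rho^{-k(1-\alpha)}M,\rho^k x)$, which has the same ellipticity and whose oscillation average is controlled by the hypothesis at smaller radii; $\hat\Phi_k(t,x)=\hat\Phi(\rho^{k\alpha}t,\rho^kx)/\hat\Phi(\rho^{k\alpha},\rho^kx)$, which by \eqref{A2} with $\nu_0=\nu_1=1$ satisfies the same almost-monotonicity with the same $L$; and the boundary operator is $\beta_k(x)=\beta(\rho^kx)$. The boundary datum becomes $g_k=\rho^{-k\alpha}(g(\rho^kx)-\beta(\rho^kx)\cdot\mathfrak{b}_k)$, and because $\beta_0\cdot\mathfrak{b}_k=0$ we get $|g_k|\le\rho^{-k\alpha}(|g(\rho^k x)|+|\beta(\rho^kx)-\beta_0||\mathfrak{b}_k|)$. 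Using $g(0)$, the $C^{\alpha'}$ smallness and $\alpha<\alpha'$, this should be $\lesssim\delta$. I expect to need $g(0)=0$, or to subtract a linear correction first; otherwise I would redefine the normalization so that $\beta_0\cdot\mathfrak{b}_k=g(0)$. The flattening $\psi_k(x')=\rho^{-k}\psi(\rho^kx')$ has a $C^1$ norm that does not grow.

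The main obstacle is checking the smallness hypothesis \eqref{080} for the rescaled right-hand side and Hamiltonian, uniformly in $k$. Here the restriction $\alpha\le 1/(1+s(\hat\Phi))$ enters. The rescaled source is
\begin{equation*}
f_k=\frac{\rho^{k(1-\alpha)}\rho^{k\alpha\vartheta}}{\hat\Phi(\rho^{k\alpha},\rho^kx)}\,\rho^{-k\alpha\vartheta}f\cdot(\text{scaling}),
\end{equation*}
and after bookkeeping I expect $|f_k|\lesssim L\,\rho^{k(1-\alpha-\alpha s(\hat\Phi))}\norm f_\infty$, using $\hat\Phi(\rho^{k\alpha})\ge L^{-1}\rho^{k\alpha s(\hat\Phi)}$ from the almost non-increasing property. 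The exponent is nonnegative precisely when $\alpha(1+s(\hat\Phi))\le1$. The Hamiltonian term scales with an extra factor $\rho^{k\alpha m}$ times the gradient-power weights, and since $m\le 1+i(\Phi)$ it is handled the same way. Finally, the shift factors $(1+|q_k|^{(\cdot)_+})$ must be controlled. Rather than bounding $q_k$ itself, I would compare against the hypothesis $|q|\le C_e/(1-\rho^\alpha)$, which bounds $|\mathfrak{b}_k|$ via the geometric sum $\sum C_e\rho^{j\alpha}$. The degenerate factor $\hat\Phi(|Du_k-q_k|)$ with large $q_k$ is then absorbed into the normalization of $\hat\Phi_k$, so that Lemma \ref{lem11} applies with $\sigma$ chosen independent of the shift. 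Choosing $\delta\le\sigma/C$, with $C$ accounting for these geometric factors, closes the induction.

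Once the induction holds, $\mathfrak{b}_k$ is Cauchy and converges to some $\mathfrak{b}_\infty$ with $|\mathfrak{b}_k-\mathfrak{b}_\infty|\le C_e\rho^{k\alpha}/(1-\rho^\alpha)$, and $\mathfrak{b}_\infty=Du(0)$. For $r\in(0,\rho]$, pick $k$ with $\rho^{k+1}<r\le\rho^k$. Then
\begin{equation*}
\sup_{\Omega_r}|u-u(0)-\mathfrak{b}_\infty\cdot x|\le \rho^{k(1+\alpha)}+\frac{C_e}{1-\rho^\alpha}\rho^{k\alpha}\rho^k\le Cr^{1+\alpha},
\end{equation*}
with $C$ depending on $\rho$ and $C_e$, both of which are universal.
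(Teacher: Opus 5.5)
Your architecture is the paper's: the same normalization $\ell_k(x)=u(0)+\mathfrak{b}_k\cdot x$ with $\beta_0\cdot\mathfrak{b}_k=0$, the same rescaling, the same use of $g(0)=0$, and the same Cauchy-sequence ending. But the step you single out as the main obstacle is not actually handled, and the route you sketch for it would fail. You cannot apply Lemma \ref{lem11} ``with $\sigma$ independent of the shift''. Its hypothesis \eqref{080} contains the weights $(1+|q_k|^{(\vartheta-i(\hat\Phi))_+})$ and $(1+|q_k|^{(m-i(\Phi))_+})$ explicitly, because Lemma \ref{holder} needs them for compactness, so $q_k$ must be bounded.

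Controlling $|q|$ and $|\mathfrak{b}_k|$ only controls $|q-\mathfrak{b}_k|$. The shift itself satisfies merely $|q_k|\le 2C_e\rho^{-k\alpha}/(1-\rho^\alpha)$, which blows up in $k$. These weights are therefore not geometric factors that a choice $\delta\le\sigma/C$ can absorb. The normalization $\hat\Phi_k(1,\cdot)=1$ does not see $q_k$ at all. In the singular regime a large shift even enlarges the effective right-hand side $f_k|Du_k-q_k|^\vartheta\approx f_k|q_k|^\vartheta$, since $i(\hat\Phi)=0$ gives only $\hat\Phi_k(t,\cdot)\ge L^{-1}$ for $t\ge1$.

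What closes the induction is an exact exponent balance.
\begin{itemize}
\item Keep the factor $\rho^{k\alpha\vartheta}$ that comes from $|Du-q|^\vartheta=\rho^{k\alpha\vartheta}|Du_k-q_k|^\vartheta$. This gives $\|f_k\|_{L^\infty}+\mathcal{K}_k\le L\delta\,\rho^{k(1-\alpha(1+s(\hat\Phi)-\vartheta))}$. Your bound $L\rho^{k(1-\alpha-\alpha s(\hat\Phi))}\|f\|_{L^\infty}$ throws away precisely this factor.
\item Multiplying by the weight yields the exponent $1-\alpha\bigl(1+s(\hat\Phi)-\vartheta+(\vartheta-i(\hat\Phi))_+\bigr)=1-\alpha\bigl(1+s(\hat\Phi)-\min\{\vartheta,i(\hat\Phi)\}\bigr)=1-\alpha(1+s(\hat\Phi))\ge0$. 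Here $\min\{\vartheta,i(\hat\Phi)\}=0$ follows from \eqref{upperbarrier}.
\item With your weaker bound, in the singular case at the endpoint $\alpha=1/(1+s(\hat\Phi))$, the product is only $O(\rho^{-k\alpha\vartheta})$ and the induction breaks.
\item Similarly $\mathcal{M}_k=L\mathcal{M}\rho^{k(1-\alpha(1+s(\hat\Phi)-\vartheta-m))}$. After multiplying by $|q_k|^{(m-i(\Phi))_+}$, the exponent is at least $1-\alpha(1+s(\hat\Phi))$, because either $m\le i(\Phi)$ (then $\vartheta=0$) or $\vartheta+i(\Phi)=\max\{0,i(\Phi)\}\ge0$.
\end{itemize}
It is this identity that makes the Hamiltonian term work, not the bound $m\le1+i(\Phi)$, which is used only in Lemma \ref{holder}.
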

\begin{proof}
	To prove that $u$ is $C^{1,\alpha}$ at 0, we only need to prove that there exist constants $\rho\in(0,\frac{1}{2})$ and $C_{e}>0$, and a sequence of affine functions 
	$$\ell_{j}(x)=\mathfrak{a}_{j}+\mathfrak{b}_{j}\cdot x$$
	with $\left\{\mathfrak{a}_{j}\right\}_{j\in \mathbb{N}}\subset \mathbb{R}$ and $\left\{\mathfrak{b}_{j}\right\}_{j\in \mathbb{N}}\subset \rn$, such that for all $j\in\mathbb{N}$, the following estimates hold:
	\begin{equation}\label{711}
		\|u-\ell_{j}\|_{L^{\infty}(\Omega_{\rho^{j}})}\leq \rho^{j(1+\alpha)},
	\end{equation}
	\begin{equation}\label{712}
	\mathfrak{a}_{j}=u(0),\quad \beta_{0}\cdot \mathfrak{b}_{j}=0,\quad |\mathfrak{b}_{j}-\mathfrak{b}_{j-1}|\leq C_{e}\rho^{(j-1)\alpha}.
	\end{equation}
	As usual, we prove this by means of an induction argument.
	For clarity of presentation, we divide the proof into three steps.\\
	{\bf Step 1. Basis of induction.} Initially, we choose $\delta>0$ small enough such that
	\begin{equation}\label{0801}
	\delta<\frac{2C_{e}}{1-\rho^{\alpha}}, \quad \delta\left(1+\frac{C_{e}}{1-\rho^{\alpha}}\right)\leq \sigma,
	\end{equation}
	\begin{equation}\label{0808}
		L \delta\left(1+\max\left\{\left(\frac{2C_{e}}{1-\rho^{\alpha}}\right)^{(\vartheta-i(\hat{\Phi}))_+},\left(\frac{2C_{e}}{1-\rho^{\alpha}}\right)^{(m-i({\Phi}))_+}\right\}\right)\leq \sigma
	\end{equation}
	with $\sigma$ coming from Lemma \ref{lem11}.
	With such choice, it follows from \eqref{071} and $|q|\leq \frac{C_{e}}{1-\rho^{\alpha}}$ that the assumption \eqref{080} holds. Hence, the conclusion \eqref{011} holds. Let $\mathfrak{b}_{0}=0$, the case $j=1$ follows at once from Lemma \ref{lem11}.\\
	{\bf Step 2. Induction process.} Suppose that the conclusion holds for $j=1,2,...,k$; we examine the claim also holds for $j=k+1$. To this end, we introduce an auxiliary function $u_{k}$ as 
	\begin{equation*}
		u_{k}(x):=\frac{u\left(\rho^{k}x\right)-\ell_{k}\left(\rho^{k}x\right)}{\rho^{k(1+\alpha)}}.
	\end{equation*}
	Then $u_{k}$ is a viscosity of 
	\begin{equation*}
		\left\{\begin{aligned}
			\hat{\Phi}_{k}(\Abs{D{u_{k}}-q_{k}},x){F}_{k}(D^2 {u_{k}}, x)+\Abs{D{u_{k}}-q_{k}}^{\vartheta}\mathcal{H}_{k}({D{u_{k}}-q_{k}},x)&= {f_{k}}\Abs{D{u_{k}}-q_{k}}^{\vartheta} && \text{in} \;\; \left(\frac{1}{\rho^{k}}\Omega\right)\cap B_{1}, \\
			\beta_{k}\cdot Du_{k} &=  g_{k} && \text{on} \; \partial\left(\frac{1}{\rho^{k}}\Omega\right)\cap B_{1},
		\end{aligned}\right.
	\end{equation*}
	where 
	\begin{align*}
		{F_{k}}(X,x):=&\rho^{k(1-\alpha)}F\left(\rho^{k(\alpha-1)}X,\rho^{k}x\right),\quad
		{\hat{\Phi}_{k}}(t,x):=\frac{\hat{\Phi}\left(\rho^{k\alpha}t,\rho^{k}x\right)}{\hat{\Phi}\left(\rho^{k\alpha},\rho^{k}x\right)},\\
		{\mathcal{H}_{k}}(t,y):=&\frac{\rho^{k(1-\alpha(1-\vartheta)))}}{\hat{\Phi}\left(\rho^{k\alpha},\rho^{k}x\right)}\mathcal{H}(\rho^{k\alpha}t,\rho^{k}x),\quad
		{f_{k}}(y):=\frac{\rho^{k(1-\alpha(1-\vartheta))}}{\hat{\Phi}\left(\rho^{k\alpha},\rho^{k}x\right)}f(\rho^{k}x),\\
		{\beta_{k}}(x):=&\beta(\rho^{k}x),\quad
		{g_{k}}(x):=\frac{g\left(\rho^{k}x\right)-\beta\left(\rho^{k}x\right)\cdot \mathfrak{b}_{k}}{\rho^{k\alpha}},\\
		\psi_{k}(x):=&\psi_{\frac{1}{\rho^{k}}\Omega}(x)=\frac{\psi(\rho^{k}x)}{\rho^{k}},\quad
		q_{k}:=\frac{q-\mathfrak{b}_{k}}{\rho^{k\alpha}}.
	\end{align*}
	It is obvious that ${F_{k}}$ is a uniformly $(\lambda,\Lambda)$-elliptic operator and 
	\begin{equation*}
		\begin{split}
			{\rm osc}_{{F_{k}}}(x)=\sup\limits_{M\in S^{d}\setminus \{0\}}\frac{\Abs{F\left(\rho^{k(\alpha-1)}M,\rho^{k}x\right)-F\left(\rho^{k(\alpha-1)}M,0\right)}}{\rho^{k(\alpha-1)}\|M\|}={\rm osc}_{{F}}(\rho^{k}x),
		\end{split}
	\end{equation*}
	which immediately yields that
	\begin{equation*}
		\begin{split}
			\left(\fint_{\left(\frac{1}{\rho^{k}}\Omega\right)\cap B_{1}}\Abs{{\rm osc}_{{F_{k}}}(x)}^{d}\text{d}x\right)^{1/d}=\left(\fint_{\Omega_{1}}\Abs{{\rm osc}_{{F}}(x)}^{d}\text{d}x\right)^{1/d}\leq \delta\leq \sigma.
		\end{split}
	\end{equation*} 
	By induction assumption \eqref{711}, we have
	$$\|{u_{k}}\|_{L^{\infty}\left(\left(\frac{1}{\rho^{k}}\Omega\right)\cap B_{1}\right)}\leq 1.$$
	It is clear that 
	$\|\psi_{k}\|_{C^{1}(T_{\rho^{-k}})}\leq \|\psi\|_{C^{1}(T_{1})} \leq \delta\leq \sigma$ and 
	$$[\beta_{k}-\beta_{0}]_{C^{\alpha^{\prime}}(T_{\rho^{-k}})}\leq \rho^{k\alpha^{\prime}}[\beta-\beta_{0}]_{C^{\alpha^{\prime}}(T_{1})}\leq \delta\leq \sigma.$$ 
	Notice that $\hat{\Phi}_{k}$ still satisfies \eqref{A2} with the same constants $(i(\hat{\Phi}),s(\hat{\Phi}))$ and ${\hat{\Phi}_{k}}(1,\cdot)=1$.
	Applying the properties of $\hat{\Phi}_{k}$, in combination  with \eqref{A3} and $\rho\in(0,\frac{1}{2})$, we deduce that
\begin{equation}\label{081}
	\|{f_{k}}\|_{L^{\infty}\left(\left(\frac{1}{\rho^{k}}\Omega\right)\cap B_{1}\right)}\leq \frac{L\rho^{k(1-\alpha(1-\vartheta))}\|{f}\|_{L^{\infty}\left(\Omega_{1}\right)}}{\rho^{k\alpha s(\hat{\Phi})}},
\end{equation}
	\begin{equation}\label{082}
		|\mathcal{H}_{k}(t,x)|\leq \frac{L\rho^{k(1-\alpha(1-\vartheta))}}{\rho^{k\alpha s(\hat{\Phi})}}\left(\mathcal{K}+\mathcal{M}\rho^{k\alpha m}|t|^{m}\right)=:\mathcal{K}_{k}+\mathcal{M}_{k}|t|^{m}.
	\end{equation} 
	It follows from the induction hypothesis \eqref{712} that		
	\begin{align*}
		|\mathfrak{b}_{k}|\leq |\mathfrak{b}_{0}|+\sum_{j=1}^{k}\abs{\mathfrak{b}_{j}-\mathfrak{b}_{j-1}}\leq C_{e}\sum_{j=1}^{k}\rho^{\alpha(j-1)}\leq \frac{C_{e}}{1-\rho^{\alpha}}.
	\end{align*}
	This together with $|q|\leq \frac{C_{e}}{1-\rho^{\alpha}}$ yields that
	\begin{equation}\label{083}
		|q_{k}|\leq \frac{|q|+|\mathfrak{b}_{k}|}{\rho^{k\alpha}}\leq \frac{2C_{e}}{\rho^{k\alpha}(1-\rho^{\alpha})}.
	\end{equation} 
Utilizing \eqref{071} and \eqref{081}-\eqref{083}, in combination with ${\alpha}\in \left(0, \frac{1}{1+s(\hat{\Phi})}\right]$ and \eqref{0808}, we arrive at
	\begin{equation*}
		\begin{split}
			&(1+|q_{k}|^{(\vartheta-i(\hat{\Phi}))_+})\left(\norm{f_{k}}_{L^{\infty}(\Omega_{1})}+\mathcal{K}_{k}\right)\\\leq& L\delta \rho^{k\left(1-\alpha(1+s(\hat{\Phi})-\vartheta)\right)}\left(1+\rho^{-k\alpha(\vartheta-i(\hat{\Phi}))_{+}}\left(\frac{2C_{e}}{1-\rho^{\alpha}}\right)^{(\vartheta-i(\hat{\Phi}))_+}\right)\\
			\leq& L\delta \rho^{k\left(1-\alpha(1+s(\hat{\Phi})-\vartheta+(\vartheta-i(\hat{\Phi}))_+)\right)}\left(1+\left(\frac{2C_{e}}{1-\rho^{\alpha}}\right)^{(\vartheta-i(\hat{\Phi}))_+}\right)\\
			=&L\delta \rho^{k\left(1-\alpha(1+s(\hat{\Phi})-\min\{\vartheta,i(\hat{\Phi})\})\right)}\left(1+\left(\frac{2C_{e}}{1-\rho^{\alpha}}\right)^{(\vartheta-i(\hat{\Phi}))_+}\right)\\
			=&L\delta \rho^{k\left(1-\alpha(1+s(\hat{\Phi}))\right)}\left(1+\left(\frac{2C_{e}}{1-\rho^{\alpha}}\right)^{(\vartheta-i(\hat{\Phi}))_+}\right)\\
			\leq& L\delta \left(1+\left(\frac{2C_{e}}{1-\rho^{\alpha}}\right)^{(\vartheta-i(\hat{\Phi}))_+}\right)\leq \sigma
		\end{split}
	\end{equation*} 
	\begin{equation*}
		\begin{split}
			&(1+|q_{k}|^{(\vartheta-i(\hat{\Phi}))_+})\mathcal{M}_{k}\\
			\leq& L\delta \rho^{k\left(1-\alpha(1+s(\hat{\Phi})-\vartheta-m+(m-i({\Phi}))_+)\right)}\left(1+\left(\frac{2C_{e}}{1-\rho^{\alpha}}\right)^{(m-i({\Phi}))_+}\right)\\
			=&	
			\begin{cases}
				L\delta \rho^{k\left(1-\alpha(1+s(\hat{\Phi})-m)\right)}\left(1+\left(\frac{2C_{e}}{1-\rho^{\alpha}}\right)^{(m-i({\Phi}))_+}\right) &\text{for}\;\;  0<m\leq i(\Phi)\\
					L\delta \rho^{k\left(1-\alpha(1+s(\hat{\Phi})-\max\left\{0,i(\Phi)\right\})\right)}\left(1+\left(\frac{2C_{e}}{1-\rho^{\alpha}}\right)^{(m-i({\Phi}))_+}\right)  &\text{for}\;\; i(\Phi)<m\leq 1+i(\Phi)
			\end{cases}\\
			\leq &L\delta \left(1+\left(\frac{2C_{e}}{1-\rho^{\alpha}}\right)^{(m-i({\Phi}))_+}\right)\leq \sigma.
		\end{split}
	\end{equation*} 
	In addition, by virtue of $\beta_{0}\cdot \mathfrak{b}_{k}=0$ and $g(0)=0$, we have
	\begin{equation*}
		\begin{split}
			|g_{k}|&=\Abs{\frac{g\left(\rho^{k}x\right)-g(0)-\left(\beta\left(\rho^{k}x\right)-\beta_{0}\right)\cdot \mathfrak{b}_{k}}{\rho^{k\alpha}}}\\
			&\leq \frac{[g]_{C^{\alpha^{\prime}}(T_{1})}\rho^{k\alpha^{\prime}}|x|^{\alpha^{\prime}}+[\beta-\beta_{0}]_{C^{\alpha^{\prime}}(T_{1})}\rho^{k\alpha^{\prime}}|x|^{\alpha^{\prime}} |\mathfrak{b}_{k}|}{\rho^{k\alpha}}
		\end{split}
	\end{equation*} 
	Then it follows from $\alpha\leq \alpha^{\prime}$, $\|g\|_{C^{\alpha^{\prime}}(T_{1})}\leq \delta$, $|\mathfrak{b}_{k}|\leq \frac{C_{e}}{1-\rho^{\alpha}}$, and \eqref{0801} that
	\begin{equation*}
		\begin{split}
			\|g_{k}\|_{L^{\infty}(T_{\rho^{-k}})}&\leq [g]_{C^{\alpha^{\prime}}(T_{1})}+[\beta-\beta_{0}]_{C^{\alpha^{\prime}}(T_{1})}|\mathfrak{b}_{k}|\\
			&\leq [g]_{C^{\alpha^{\prime}}(T_{1})}+[\beta-\beta_{0}]_{C^{\alpha^{\prime}}(T_{1})}\frac{C_{e}}{1-\rho^{\alpha}}\\
			&\leq \delta\left(1+\frac{C_{e}}{1-\rho^{\alpha}}\right)\leq \sigma.
		\end{split}
	\end{equation*} 
	
	At this moment, the assumptions in Lemma \ref{lem11} are satisfied. Thus, there is an affine function 
	$\tilde{\ell}(x)=\tilde{\mathfrak{a}}+\tilde{\mathfrak{b}}\cdot x$ with $\tilde{\mathfrak{a}}=u_{k}(0)=0$ and $|\tilde{\mathfrak{b}}|\leq C_{e}$ such that 
	\begin{equation}\label{22rescalingback}
		\|u_{k}-\tilde{\ell}\|_{L^{\infty}\left(\left(\frac{1}{\rho^{k}}\Omega\right)\cap B_{\rho}\right)}\leq \rho^{1+\alpha},\quad \beta_{0}\cdot \tilde{\mathfrak{b}}=0.
	\end{equation}
	To proceed, we define the $(k+1)$-th approximating affine function $\ell_{k+1}$ as 
	\begin{equation*}
		\ell_{k+1}(x):=\mathfrak{a}_{k+1}+\mathfrak{b}_{k+1}\cdot x,
	\end{equation*}
	where $\mathfrak{a}_{k+1}:=\mathfrak{a}_{k}+\rho^{k(1+\alpha)}\tilde{\mathfrak{a}}=u(0)$ and   $\mathfrak{b}_{k+1}:=\mathfrak{b}_{k}+\rho^{k\alpha}\tilde{\mathfrak{b}}$.
	Scaling \eqref{22rescalingback} back, we reach that
	\begin{equation*}
		\|u-\ell_{k+1}\|_{L^{\infty}\left(\Omega\cap B_{\rho^{k+1}}\right)}=\rho^{k(1+\alpha)}\|u_{k}-\tilde{\ell}\|_{L^{\infty}\left(B_{\rho^{k}}(x)\cap \{y_{d}>\phi_{k}(y^{\prime})\}\right)}
		\leq \rho^{(k+1)(1+\alpha)}.
	\end{equation*}
	Also,
	\begin{equation*}
		\beta_{0}\cdot {\mathfrak{b}_{k+1}}=0,\quad |\mathfrak{b}_{k+1}-\mathfrak{b}_{k}|=\rho^{k\alpha}\abs{\tilde{\mathfrak{b}}}\leq C_{e}\rho^{k\alpha}.
	\end{equation*}
	{\bf Step 3. Convergence analysis}. From Step 2, we know that $\{\mathfrak{b}_{j}\}_{j\in\mathbb{N}}\subset \rn$ is Cauchy sequence, hence, they converge, that is
	\begin{equation*}
		\quad \mathfrak{b}_{j}\rightarrow\overline{\mathfrak{b}}
	\end{equation*}	
	with	
	\begin{align}\label{511}
		|\mathfrak{b}_{j}-\overline{\mathfrak{b}}|\leq  \frac{C\rho^{j\alpha}}{1-\rho^{\alpha}}.
	\end{align}	
	
	Finally, given any $0<r\leq \rho$, we can find $j\in \mathbb{N}$ such that $\rho^{j+1}<r\leq \rho^{j}$. Then, for $\overline{\ell}(x):=\overline{\mathfrak{a}}+\overline{\mathfrak{b}}\cdot x$ with $\overline{\mathfrak{a}}:=u(0)$, we arrive at
	\begin{align*}
		\|u-\overline{\ell}\|_{L^{\infty}(\Omega_{r})}&\leq \|u-\overline{\ell}\|_{L^{\infty}(\Omega_{\rho^{j}})}\\
		&\leq \|u-{\ell_{j}}\|_{L^{\infty}(\Omega_{\rho^{j}})}+\|\ell_{j}-\overline{\ell}\|_{L^{\infty}(\Omega_{\rho^{j}})}\\
		&\leq  \rho^{j(1+\alpha)}+|\mathfrak{a}_{j}-\overline{\mathfrak{a}}|+\rho^{j}|\mathfrak{b}_{j}-\overline{\mathfrak{b}}|\\
		&\leq \rho^{j(1+\alpha)}\left(1+\frac{C}{1-\rho^{\alpha}}\right)\\
		&\leq \frac{1}{\rho^{1+\alpha}}\left(1+\frac{C}{1-\rho^{\alpha}}\right)r^{1+\alpha}\\
		&\leq Cr^{1+\alpha}.
	\end{align*}
	This implies that $u$ is $C^{1,\alpha}$ at $0$. This completes the proof of the desired result.
\end{proof}
\subsection{Proof of Theorem \ref{thm1}}
In this section, we conplete the proof of Theorem \ref{thm1}. To achieve this, we discuss the scaling properties and small regime of the equation that allow us to reduce the proof of Theorem \ref{thm1} to the hypotheses of Proposition \ref{prop3.4}.
\begin{proof}[Proof of Theorem \ref{thm1}]
	Initially, by a suitable rotation and translation of the coordinates, we may assume that $0 \in \partial\Omega$ and the domain $\Omega_1$ is given by $\{x \in B_1 : x_d > \psi_\Omega(x')\}$ for some $C^1$ function $\psi=\psi_\Omega$ with $\psi(0) = 0$ and $D\psi(0) = 0$. Let $\delta$, $\rho$ and $C_{e}$ be the constants given in Proposition \ref{prop3.4}. Next, arguing as in \cite[Section 5]{Byun2025CVPDE}, we may assume that $u$ satisfies 
\begin{equation*}
	\left\{\begin{aligned}
		\hat{\Phi}(|Du-q|,x)F(D^{2}u,x)+|Du-q|^{\vartheta}\mathcal{H}(Du-q,x) &=f|Du-q|^{\vartheta} &&\text{in } \;\;\Omega_{1}, \\
		\beta\cdot Du&=g &&\text{on }\;\; \partial\Omega_{1},
	\end{aligned}\right.
\end{equation*}
for some vector $q \in \mathbb{R}^d$, and $\nu_0=\nu_1=1$. Additionally, we may assume $u(0)=0$, $\norm{u}_{L^\infty(\Omega_{1})} \leq 1$, $g(0)=0$, and the smallness condition \eqref{081} hold
and $$|q|\leq\frac{{C_{e}}}{1-\rho^{{\alpha}}}.$$
In fact, let $u$ be a viscosity solution to 
	\begin{equation*}
	\left\{\begin{aligned}
		\hat{\Phi}(\abs{Du}, x)F(D^2 u, x)+|Du|^{\vartheta}\mathcal{H}(Du,x)&=f|Du|^{\vartheta} &&\text{in } \;\;\Omega_{1},\\
		\beta\cdot Du&={g} &&\text{on }\;\; \partial\Omega_{1}.
	\end{aligned}\right.
\end{equation*}
Let us define 
$\tilde{u}$ by
$$\tilde{u}(x)=\frac{u(\tau x)}{K}$$
for constants $K> 1>\tau>0$ to be determined later. It is
immediate to verify that $\tilde{u}$ is a viscosity solution of
\begin{equation*}
	\left\{\begin{aligned}
		\tilde{\hat{\Phi}}(\Abs{D\tilde{u}}, x)	\tilde{F}(D^2 \tilde{u}, x)+	|D\tilde{u}|^{\vartheta}\tilde{\mathcal{H}}({D\tilde{u}}, x)&= 	\tilde{f}|D\tilde{u}|^{\vartheta} &&\text{in } \;\;\left(\frac{1}{\tau}\Omega\right)\cap B_{1}, \\
		\tilde{\beta}\cdot D\tilde{u}&=g_{1} &&\text{on }\;\; \partial\left(\frac{1}{\tau}\Omega\right)\cap B_{1},
	\end{aligned}\right.
\end{equation*}
where 
\begin{align*}
	\tilde{F}(X,x):=&\frac{\tau^{2}}{K}F\left(\frac{K}{\tau^{2}}X,\tau x\right),\quad
	\tilde{\hat{\Phi}}(t,x):=\frac{\Phi\left(\frac{K}{\tau}t,\tau x\right)}{\Phi\left(\frac{K}{\tau},\tau x\right)},\\
	\tilde{\mathcal{H}}(t,x):=&\frac{\tau^{2-\vartheta}}{K^{1-\vartheta}\Phi\left(\frac{K}{\tau},\tau x\right)}\mathcal{H}\left(\frac{K}{\tau}t,\tau x\right),\quad
	\tilde{f}(x):=\frac{\tau^{2-\vartheta}}{K^{1-\vartheta}\Phi\left(\frac{K}{\tau},\tau x\right)}f(\tau x),\\
	\tilde{\beta}(x):=&\beta(\tau x),\quad
	g_{1}(x):=\frac{\tau}{K}g(\tau x).
\end{align*}
Note that $\tilde{F}$ is still a uniformly $(\lambda,\Lambda)$-elliptic operator, $\tilde{\hat{\Phi}}$ satisfies \eqref{A3} with the same constants $(i(\hat{\Phi}),s(\hat{\Phi}))$, and $\tilde{\hat{\Phi}}(1,\cdot)=1$.
A direct calculation yields 
\begin{equation*}
	{\rm osc}_{\tilde{F}}(x)=\sup\limits_{M\in S^{d}\setminus \{0\}}\frac{\Abs{F\left(\frac{K}{\tau^{2}}M, \tau x\right)-F\left(\frac{K}{\tau^{2}}M,0\right)}}{\frac{K}{\tau^{2}}\|M\|}={\rm osc}_{{F}}(\tau x).
\end{equation*}
	which immediately leads to 
\begin{equation*}
	\begin{split}
		\left(\fint_{\left(\frac{1}{\tau}\Omega\right)\cap B_{1/2}}\Abs{{\rm osc}_{\tilde{F}}(x)}^{d}\text{d}x\right)^{1/d}=\left(\int_{\Omega_{1/2}}\Abs{{\rm osc}_{{F}}(x)}^{d}\text{d}x\right)^{1/d}\leq \eta_{0}\leq \delta.
	\end{split}
\end{equation*} 
provided $\eta_{0}$ is chosen sufficiently small. Since $\psi_{\Omega}\in C^1$, there exists $0<\tilde{\tau}\ll1$, depending only on the $C^1$ modulus
of $\psi_{\Omega}$, such that
\[
|D\psi_{\Omega}(x)|\leq \delta
\qquad\text{for all } |x|\le \tilde{\tau}.
\]
Set
\[
\tilde{\psi}(x):=\psi_{\frac{1}{\tilde{\tau}}\Omega}(x)=\frac{\psi({\tau}x)}{{\tau}}
\]
for $\tau\leq \tilde{\tau}$. Then, we choose $\tau\in(0,1)$ such that $\|\tilde{\psi}\|_{C^1(\mathrm{T}_{\tau^{-1}})}\leq \delta$ and
\[
[\tilde{\beta}-\beta_0]_{C^{\alpha^{\prime}}(\mathrm{T}_{\tau^{-1}})}
\le \tau^{\alpha^{\prime}}[\beta-\beta_0]_{C^{\alpha^{\prime}}(\mathrm{T}_{1})}\leq \frac{1-\rho^{\alpha}}{2C_{e}}\delta.
\]
Combining \eqref{A2} with \eqref{A3} and $\frac{K}{\tau}\geq 1$, we get 
\begin{equation*}
	|\tilde{\mathcal{H}}(t,x)|\leq \frac{L\tau^{2+i(\hat{\Phi})-\vartheta}}{\nu_{0}K^{1+i(\hat{\Phi})-\vartheta}}\bigg(\mathcal{K}+\mathcal{M}\left(\frac{K}{\tau}\right)^{m}|t|^{m}\bigg)=:\tilde{\mathcal{K}}+\tilde{\mathcal{M}}|t|^{m}, 
\end{equation*}
\begin{equation*}
	\|\tilde{f}\|_{L^{\infty}\left(\frac{1}{\tau}\Omega\cap B_{1}\right)}\leq \frac{L\tau^{2+i(\hat{\Phi})-\vartheta}}{\nu_{0}K^{1+i(\hat{\Phi})-\vartheta}}\|{f}\|_{L^{\infty}(\Omega_{1})}.
\end{equation*}
Next, for $0<m<1+i(\Phi)$, we select
$$K:=1+4\|u\|_{L^{\infty}(\Omega)}+\frac{2\|g\|_{C^{\alpha^{\prime}}(T_{1})}}{\mu_{0}\delta}+\left(\frac{L\left(\|f\|_{L^{\infty}(\Omega)}+\mathcal{K}\right)}{\nu_{0}\delta}\right)^{\frac{1}{1+i(\hat{\Phi})-\vartheta}}+\left(\frac{L\mathcal{M}}{\nu_{0}\delta}\right)^{\frac{1}{1+i(\hat{\Phi})-m-\vartheta}}.$$
For $m=1+i(\Phi)$, we choose $\tau\in(0,1)$ fulfilling the conditions stated above and, in addition, such that 
\[
\tau\leq \frac{\nu_{0}\delta}{L\mathcal{M}},
\] 
and 
$$K:=1+4\|u\|_{L^{\infty}(\Omega)}+\frac{2\|g\|_{C^{\alpha^{\prime}}(T_{1})}}{\mu_{0}\delta}+\left(\frac{L\left(\|f\|_{L^{\infty}(\Omega)}+\mathcal{K}\right)}{\nu_{0}\delta}\right)^{\frac{1}{1+i(\hat{\Phi})-\vartheta}}.$$
With these choices, we see that
$$\|\tilde{u}\|_{L^{\infty}\left(\frac{1}{\tau}\Omega\cap B_{1}\right)}\leq \frac{1}{4},\quad \|g_{1}\|_{C^{\alpha^{\prime}}(T_{\tau^{-1}})}\leq \frac{\mu_{0}}{2}\delta ,$$
$$\|\tilde{f}\|_{L^{\infty}\left(\frac{1}{\tau}\Omega\cap B_{1}\right)}+\tilde{\mathcal{K}}\leq \delta,\quad \tilde{\mathcal{M}}\leq \delta.$$
To proceed, denote
$$\tilde{v}(x):= \tilde{u}(x)-\frac{{g}_{1}(0)}{\tilde{\beta}_d(0)} x_d -\tilde{u}(0)$$
and $$\tilde{q}:=-\frac{g_1(0)}{\tilde{\beta}_d(0)} e_d$$
Then $\tilde{v}$ is a viscosity solution of 
 \begin{equation*}
 	\left\{\begin{aligned}
 		\tilde{\hat{\Phi}}(\Abs{D\tilde{v}-\tilde{q}}, x)	\tilde{F}(D^2 \tilde{v}, x)+	|D\tilde{v}-\tilde{q}|^{\vartheta}\tilde{\mathcal{H}}({D\tilde{v}-\tilde{q}}, x)&= 	\tilde{f}|D\tilde{v}-\tilde{q}|^{\vartheta} &&\text{in } \;\;\left(\frac{1}{\tau}\Omega\right)\cap B_{1}, \\
 		\tilde{\beta}\cdot D\tilde{v}&=\tilde{g} &&\text{on }\;\; \partial\left(\frac{1}{\tau}\Omega\right)\cap B_{1},
 	\end{aligned}\right.
 \end{equation*}
where $\tilde{g}=g_1-\frac{g_1(0)}{\tilde{\beta}_n(0)} \tilde{\beta}_n$. It follows from $\beta_{d}(0)\geq \mu_{0}$ and $\delta\leq \frac{{C_{e}}}{1-\rho^{{\alpha}}}$ that
$$|\tilde{q}|\leq \Abs{\frac{g_1(0)}{\tilde{\beta}_d(0)}}\leq\frac{|g_1(0)|}{\mu_{0}}\leq \frac{\delta}{2}\leq \frac{{C_{e}}}{1-\rho^{{\alpha}}}.$$
Note that
$\tilde{v}(0)=0$, $\tilde{g}(0)=0$, and
\[
\|\tilde{v}\|_{L^{\infty}\left(\frac{1}{\tau}\Omega\cap B_{1}\right)}
\le 2 \|\tilde{u}\|_{L^{\infty}\left(\frac{1}{\tau}\Omega\cap B_{1}\right)} +\frac{|g_1(0)|}{\mu_0}\leq \frac{1}{2}+\frac{\delta}{2}
\le 1.
\]
Moreover, we have
\begin{equation}
	\|\tilde{g}\|_{C^{\alpha^{\prime}}(\mathrm{T}_{\tau^{-1}})}\leq
	\|g_1\|_{C^{\alpha^{\prime}}(\mathrm{T}_{\tau^{-1}})}
	+\frac{|g_1(0)|}{\mu_0}\|\tilde{\beta}_d\|_{C^{\alpha^{\prime}}(\mathrm{T}_{\tau^{-1}})}\leq \frac{\mu_{0}}{2}\delta +\frac{{C_{e}}}{1-\rho^{{\alpha}}}\frac{1-\rho^{\alpha}}{2C_{e}}\delta
	\leq \delta.
\end{equation}	
	
	At this point, the assumptions in Proposition \ref{prop3.4} have been fulfilled, and hence we obtain that $u\in C^{1,\alpha}(0)$.  Finally, by combining the interior $C^{1,\alpha_{1}}$ regularity \cite[Theorem 1.1]{Huo2026} and boundary regularity with standard covering arguments, we can obtain the global $C^{1,\alpha}$ regularity with $\alpha$ satisfying \eqref{exponent}. Thus, the proof of Theorem \ref{thm1} for the case $\gamma=0$ is complete.
	
	For the general case, let $u$ be a viscosity solution to \eqref{8866}. Then, $u$ also solves the following problem:
		\begin{equation}\label{8868}
		\left\{\begin{aligned}
			\hat{\Phi}(\abs{Du}, x)F(D^2 u, x)+|Du|^{\vartheta}\mathcal{H}(Du,x)&=f|Du|^{\vartheta} &&\text{in } \;\;\Omega_{1},\\
			\beta\cdot Du&=\hat{g} &&\text{on }\;\; \partial\Omega_{1},
		\end{aligned}\right.
	\end{equation}
	where $\hat{g}=g-\gamma u$. By Lemma \ref{holder}, we know  $u\in C^{0,\alpha^{\prime}}$ up to the boundary, which implies that $\hat{g}\in C^{0,\alpha^{\prime}}(\partial\Omega_{1})$. Consequently, we can apply Proposition \ref{prop3.4} to the problem \eqref{8868} and obtain the desired regularity result. The proof is complete.
\end{proof}


\section*{Data availability} Data sharing is not applicable to this article as obviously no datasets were generated or analyzed during the current study.

\section*{Conflict of interest} Author states no conflict of interest.

\end{document}